\newtheorem{thm}{Theorem}
\newtheorem{lem}{Lemma}
\newtheorem{cor}{Corollary}
\newtheorem{prop}{Proposition}
\theoremstyle{definition}
\newtheorem{defn}{Definition}
\newtheorem{rem}{Remark}
\newtheorem{exmp}{Example}
\def\Kerr{\mathop{\fam 0 Ker}\nolimits}
\def\Curr{\mathop{\fam 0 Cur}\nolimits}
\def\Cend{\mathop{\fam 0 Cend}\nolimits}
\def\End{\mathop{\fam 0 End}\nolimits}
\def\ConfAs{\mathop{\fam 0 ConfAs}\nolimits}
\def\Span{\mathop{\fam 0 Span}\nolimits}
\def\oo#1{\mathbin{{}_{(#1)}}}
\title{On finite representations of conformal algebras}
\author{Pavel Kolesnikov}
\thanks{Partially supported by MD-2438.2009.1,
RFBR 09-01-00157.}
\address{Sobolev Institute of Mathematics, Novosibirsk, Russia}
\address{University of California, San Diego, USA}
\begin{document}

\begin{abstract}
We prove a finite torsion-free associative conformal algebra
to have a finite faithful conformal representation.
As a corollary, it is shown that one may join a conformal
unit to such an algebra. Some examples
are stated to demonstrate that a conformal unit can not be joined to any torsion-free
associative conformal algebra. In particular, there exist associative conformal
algebras of linear growth and even locally finite ones that have no finite faithful
representation.
We also consider the problem of existence of a finite faithful representation
for a torsion-free finite Lie conformal algebra (the analogue of Ado's Theorem).
It turns out that the conformal analogue of the Poincar\'e---Birkhoff---Witt Theorem
would imply the Ado Theorem for finite Lie conformal algebras. We also prove that
every torsion-free finite solvable Lie conformal algebra has a finite faithful
representation.
\end{abstract}

\maketitle

\section{Introduction}

Conformal algebras
were introduced in \cite{Kac1996} as a useful tool for studying vertex algebras,
see \cite{FLM1998} as a general reference.
The structure of a (Lie) conformal algebra
encodes the singular part of the
operator product expansion (OPE) which is responsible for the
commutator of two fields.
Namely, suppose $(V, Y, \mathbf 1)$ is a vertex algebra, where $V$ is
a space of states, $Y:V\to \End V[[z,z^{-1}]]$ is a space-field
correspondence, $\mathbf 1\in V$ is a vacuum vector. The
commutator of two fields can be expressed as a finite distribution
\[
[Y(a,w),Y(b,z)] = \sum\limits_{n\ge 0} \dfrac{1}{n!} Y(c_n, z)
 \dfrac{\partial^n\delta(w-z)}{\partial z^n},\quad a,b\in V,
\]
where $c_n\in V$,
$\delta(w-z)=\sum\limits_{m\in \mathbb Z} w^m z^{-m-1}$
is the formal delta-function.
Then
\[
Y(c_n, z) = \mathrm{Res}_{w=0} (w-z)^n[Y(a,w),Y(b,z)],
\]
where $\mathrm{Res}_{w=0}F(w,z) $ means the formal residue at
$w=0$, i.e., the formal series in $z$ that is
a coefficient of $F(w,z)$ at~ $w^{-1}$.

One may consider the
space of fields $\{Y(a,z)\mid a\in V\}$
as an algebraic system with operations
$D(\cdot )$ and $(\cdot \oo{n}\cdot )$, $n\ge 0$,
where $D$ is the formal derivation with respect to $z$,
$Y(a,z)\oo{n} Y(b,z) = Y(c_n,z)$. The system obtained is known
as a (Lie) conformal algebra, it has precise axiomatic
description. Roughly speaking, Lie conformal algebras relate
to vertex algebras as ordinary Lie algebras relate to their
associative enveloping algebras.

Conformal algebras
as well as their representations and cohomologies
have been studied in a series of papers, see, e.g.,
\cite{BKV1999, DK1998, CK1997, Roitman1999, Roitman2000,
BKL2003, Retakh2001, Retakh2006}.
In particular, associative conformal algebras naturally appear
in the study of representations of Lie conformal algebras.

From the algebraic point of view, all these notions
(various conformal algebras, their representations and cohomologies)
are higher-level analogues of the ordinary notions
in the pseudo-tensor category \cite{BD2004}
associated with
the polynomial Hopf algebra $H=\Bbbk[D]$, see \cite{BDK2001} for
details. Note that ordinary algebras (representations, cohomologies)
correspond to the case of 1-dimensional Hopf algebra.

For conformal algebras, the analogues of finite-dimensional
algebras are those finitely generated as modules over $H=\Bbbk[D]$.
These conformal algebras are called finite.
The structure theory of finite Lie conformal algebras was developed
in \cite{DK1998} and later generalized in \cite{BDK2001} for pseudo-algebras.
The structure
theorems for finite associative conformal algebras and pseudo-algebras
were derived from the corresponding statements on Lie algebras.

If an ordinary (Lie or associative) algebra $A$ has a finite-dimensional faithful
representation then $A$ is obviously finite-dimensional itself.
This is not the case for conformal algebras:
If a (Lie or associative) conformal algebra $C$ has finite
representation (i.e., a representation on a finitely generated $H$-module)
which is faithful then $C$ may not be finite itself. This phenomenon
brings into consideration another class of ``small''
 conformal algebras---those
with finite faithful representations.
The structure of such associative conformal algebras was described
in \cite{K.2006b}. For Lie conformal algebras with
finite faithful representations, the classification problem
was partially solved in \cite{BKL2003, DSK2002, Zelmanov2003}, but
 remains open in general.

However, it is not clear that the class of all conformal algebras
that have a finite faithful representation includes the class of finite
conformal algebras.

For associative conformal algebras, this question is closely
related with another one (posed in \cite{Retakh2006}): Is it possible
to join a conformal unit (introduced in \cite{Retakh2001})
to an associative conformal algebra?
The last problem is interesting itself since
unital conformal algebras have a good structure raising from
ordinary differential algebras (see Theorem \ref{thm:Retakh}).

In this paper, we solve both these problems for associative
conformal algebras. It turns out that every finite associative
conformal algebra can be embedded into an associative conformal
algebra with a (two-sided) conformal unit. In particular,
every finite associative conformal algebra has a finite faithful
representation. For infinite associative conformal algebras,
these statements are not true: We state examples of associative
conformal algebras that can not be embedded into unital ones
(even with a one-sided unit)
and have no finite faithful representation.

For finite Lie conformal algebras, the problem of
existence of a finite faithful representation is an analogue
of the classical Ado Theorem for finite-dimensional Lie algebras.
If $L$ is a centerless (e.g., semisimple) finite Lie conformal algebra
then the regular (adjoint) representation of $L$ on itself is finite and faithful.
However, this is unknown if every finite Lie conformal algebra has a finite
faithful representation.
This is also unknown whether a finite Lie conformal
algebra can be embedded into an associative conformal algebra
(this is not true in general \cite{Roitman2000}).

In \cite{Roitman2005}, it was shown that a
nilpotent Lie conformal algebra
can be embedded into a nilpotent associative one.
Taking into account that finitely generated nilpotent associative
conformal algebra is finite, we may conclude that a finite nilpotent
Lie conformal algebra
has a finite faithful representation.
In this paper, we obtain a more general result: Every
solvable Lie conformal algebra has a finite faithful representation
(in particular, it can be embedded into a unital associative
conformal algebra).
We also show that if the conformal analogue of the Poincar\'e---Birkhoff---Witt
Theorem in the sense of \cite{Roitman2000} holds for a finite Lie conformal algebra $L$
then $L$ has a finite faithful representation.

\section{Preliminaries}

\subsection{Conformal algebras}
Suppose $C$ is a left unital module over the polynomial algebra
$H=\Bbbk [D]$ ($\mathrm{char}\,\Bbbk=0$) equipped by a countable
family of $\Bbbk $-bilinear {\em $n$-products} $(\cdot \oo{n}\cdot )$, $n$
ranges over the set $\mathbb Z_+$ of non-negative integers. Then
$C$ is said to be a {\em conformal algebra\/} \cite{Kac1996} if
these operations satisfy the following axioms:
\begin{gather}
a\oo{n} b = 0 \ \mbox{for almost all } n\ge 0,
 \label{eq:C1} \\
Da\oo{n} b = -na\oo{n-1}b, \label{eq:C2} \\
a\oo{n} Db = D(a\oo{n} b) + na\oo{n-1} b \label{eq:C3}
\end{gather}
for all $a,b\in C$. (One should assume zero in the right-hand part
when the index of a product becomes negative.)

The axiom \eqref{eq:C1}, known as the {\em locality axiom}, allows to
define {\em locality function\/} $N_C: C\times C \to \mathbb Z_+$.
Namely, $N_C(a,b)$ is the minimal non-negative integer such that
$a\oo{n} b = 0$ for all $n\ge N_C(a,b)$. Axioms \eqref{eq:C2},
\eqref{eq:C3} are called {\em sesqui-linearity}.

A conformal algebra $C$ is said to be {\em torsion-free\/} if it
has no torsion as an $H$-module:
\[
\mathrm {Tor}_H C:= \{ a\in C \mid ha=0 \mbox{ for some } 0\ne
h\in H\} = 0.
\]
It was proved in \cite{Kac1997} that the torsion of a conformal
algebra $C$ is an annihilator ideal of $C$, i.e., $C\oo{n}
\mathrm{Tor}_H C = \mathrm{Tor}_H C\oo{n} C = 0$
 for all $n\ge 0$.

A very useful technique for computations in conformal algebras was
proposed in \cite{Kac1997}. Assume $\lambda $ is a formal
variable taking on its values in $\Bbbk $, and consider
\[
a\oo{\lambda } b = \sum\limits_{n=0}^{N_C(a,b)-1}
\dfrac{\lambda^n}{n!}(a\oo{n} b) \in C[\lambda ] ,\quad a,b\in C.
\]
The polynomial $a\oo{\lambda }b$ with coefficients in $C$ is
called $\lambda$-{\em product\/} of $a$~and~$b$. Then
\eqref{eq:C2} and \eqref{eq:C3} can be equivalently written as
\[
Da\oo{\lambda } b  = -\lambda (a\oo{\lambda } b), \quad
a\oo{\lambda } Db = (D+\lambda )(a\oo{\lambda } b).
\]

For $a,b\in C$, $n\ge 0$ denote
\begin{equation}\label{eq:Braced-n-prod}
\{a\oo{n} b\} =
 \sum\limits_{s\ge 0} \dfrac{(-1)^{n+s}}{s!} D^s(a\oo{n+s} b).
\end{equation}
The sum is finite because of \eqref{eq:C1}. In terms of
$\lambda$-product,
the elements $\{a\oo{n} b\}$ are the coefficients of
$$
\{a\oo{\lambda } b\} := \sum\limits_{n\ge 0}
 \dfrac{\lambda ^n}{n!} \{a\oo{n} b\} = (a\oo{-D-\lambda } b)\in
 C[\lambda ].
$$

A conformal algebra $C$ is said to be associative if
\begin{equation}\label{eq:ConfAs}
a\oo{\lambda } (b\oo{\mu } c) = (a\oo{\lambda } b)\oo{\lambda+\mu}
c
\end{equation}
or, in terms of $n$-products,
\begin{equation}\label{eq:ConfAs-n}
a\oo{n} (b\oo{m} c) =
 \sum\limits_{s\ge 0} \binom{n}{s} (a\oo{n-s} b)\oo{m+s} c
\end{equation}
for all $a,b,c\in C$, $n,m\ge 0$.
If
\begin{gather}
(a\oo{\lambda } b) = -\{b\oo{\lambda } a\}, \label{eq:ConfA-Comm} \\
a\oo{\lambda }(b\oo{\mu } c) - b\oo{\mu} (a\oo{\lambda } c)
 = (a\oo{\lambda } b)\oo{\lambda +\mu} c
                      \label{eq:ConfJacobi}
\end{gather}
for all $a,b,c\in C$
then $C$ is said to be a Lie conformal algebra. These properties
can be expressed in terms of $n$-products as follows:
\begin{gather}
(a\oo{n} b) = -\{b\oo{n} a\}, \label{eq:ConfA-Comm-n} \\
a\oo{n}(b\oo{m} c) - b\oo{m} (a\oo{n} c)
 = \sum\limits_{s\ge 0}
  \binom{n}{s}(a\oo{n-s} b)\oo{m+s} c
                      \label{eq:ConfJacobi-n}
\end{gather}
for all $n,m\ge 0$.

In \cite{BDK2001}, these notions were explained in terms of
pseudo-tensor categories.

One may consider $\{\cdot\oo{n} \cdot\}$, $n\ge 0$, as a new
family of operations on~$C$. It is easy to verify that
\[
\{Da\oo{\lambda } b\}  =(D+\lambda )\{a\oo{\lambda } b\}, \quad
\{a\oo{\lambda } Db\} = -\lambda \{a\oo{\lambda } b\}.
\]

If $C$ is an associative conformal algebra then these operations
have the following properties (see \cite{Kac1997}):
\begin{gather}
a \oo{\lambda } \{ b \oo{\mu} c\}  =
 \{(a \oo{\lambda } b) \oo{\mu} c\};   \label{eqCC1} \\
\{a \oo{\lambda } (b \oo{\mu} c)\}  =
   \{\{a \oo{\mu } b\} \oo{\lambda -\mu} c \}  ;   \label{eqCC2} \\
\{ a \oo{\lambda }\{b \oo{\mu} c \}\}   =
   \{\{ a\oo{\lambda -\mu} b\} \oo{\mu} c \};    \label{eqCC3} \\
\{a \oo{\lambda } b\} \oo{\mu } c =
  a\oo{\mu-\lambda }(b \oo{\lambda } c),
  \label{eqCC4}
\end{gather}
for $a,b,c\in C$.

\begin{exmp}
If $A$ is an ordinary (associative or Lie) algebra then
the free $H$-module $C=H\otimes A$ is an (associative or Lie)
conformal algebra with respect to
\[
(f(D)\otimes a)\oo{\lambda } (g(D)\otimes b)
 = f(-\lambda)g(D+\lambda )\otimes ab, \quad
 f,g\in H,\ a,b\in A.
\]
This structure is called the {\em current conformal algebra\/}
$\Curr A$ over~$A$.
\end{exmp}

\begin{exmp}\label{exmp:WeylConf}
Consider the free $H$-module
$C=H\otimes \Bbbk[x]\simeq \Bbbk[D,x]$. Then the $\lambda $-product
\[
f(D,x)\oo{\lambda } g(D,x) = f(-\lambda , x)g(D+\lambda , x+\lambda ),
\quad f,g\in \Bbbk[D,x],
\]
turns $C$ into an associative conformal algebra known as the
{\em Weyl conformal algebra}.
\end{exmp}

\begin{exmp}
Consider the free 1-generated $H$-module
generated by an element $x$. This module,
$H\otimes \Bbbk x$, with respect to the $\lambda $-product
\[
(f(D)\otimes x)\oo{\lambda } (g(D)\otimes x)
=f(-\lambda )g(D+\lambda )(D+2\lambda )\otimes x
\]
is a Lie conformal algebra called the {\em Virasoro conformal algebra}
$\mathcal Vir$.
\end{exmp}

Given an associative conformal algebra $C$, one may consider the
same $H$-module endowed with new operations $(\cdot
\oo{n}^{\mathrm{op}}\cdot )$, $n\ge 0$, defined as follows:
\[
a\oo{n}^{\mathrm{op}} b = \{b\oo{n} a\} , \quad a,b\in C.
\]
Then the system $C^{\mathrm{op}}=(C, D, (\cdot \oo{n}^{\mathrm{op}}\cdot), n\ge 0 )$ is
an associative conformal algebra called opposite
to~$C$. It is clear that $(C^{\mathrm{op}})^{\mathrm{op}} = C$.

An element $e$ of an associative conformal algebra $C$ is said to
be a (left) {\em unit} \cite{Retakh2001} if $e\oo{0} x=x$ for all
$x\in C$ and $N_C(e,e)=1$. It is natural to introduce the
``opposite'' notion: $e$ is said to be a {\em right unit\/} of $C$ if
$\{x\oo{0} e\}=x$ for all $x\in C$ and $N_C(e,e)=1$. It is clear
that if $e$ is a left (right) unit of $C$ then $e$ is a right
(left) unit of~$C^{\mathrm{op}}$. If a left unit $e\in C$ is a right unit
of $C$ then it is said to be a two-sided unit of~$C$.

Relations \eqref{eqCC2} and \eqref{eqCC4} imply that if
$e$ is a left (right) unit of an associative conformal algebra $C$
then $(a-\{a\oo{0} e \})\oo{\lambda} C=0$
(respectively, $\{C\oo{\lambda } (a-e\oo{0} a)\}= 0$).
Therefore, if the left (right) annihilator of $C$ is zero
then each left (right) unit is two-sided.
For example, this is the case for
semisimple conformal algebras.
The following statement was originally proved in a
slightly weaker form.

\begin{thm}[\cite{Retakh2001}]\label{thm:Retakh}
Let $C$ be an associative conformal algebra with a two-sided unit.
Then there exists an ordinary associative algebra $A$ with a locally
nilpotent derivation $\partial $ such that $C\simeq H\otimes A$ with
respect to the $\lambda $-product is given by
\[
(f(D)\otimes a)\oo{\lambda } (g(D)\otimes b)
 = f(-\lambda )g(D+\lambda )\otimes a e^{\lambda \partial }(b),
 \quad a,b\in A,\ f,g\in H.
\]
\end{thm}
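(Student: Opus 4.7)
The plan is to reconstruct the algebra $A$ and the derivation $\partial$ intrinsically from the pair $(C,e)$ by setting
\[
A := \{\, a\in C \mid a\oo{n} e = 0 \text{ for all } n\geq 1 \,\},
\]
which will play the role of the ``constant'' subspace $1\otimes A$. Note $e\in A$ by the assumption $N_C(e,e)=1$. First I would show that for every $x\in C$ each Taylor coefficient $y_k := x\oo{k}e$ lies in $A$, by induction on $k$: applying \eqref{eq:ConfAs-n} to $x\oo{k}(e\oo{n}e)$ for $n\geq 1$, the left side vanishes since $e\oo{n}e=0$, while the right side simplifies to $y_k\oo{n}e + \sum_{s\geq 1}\binom{k}{s}(y_{k-s}\oo{n+s}e)$, and the induction hypothesis kills the latter sum. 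The right-unit identity $\{x\oo{0}e\}=x$ then expands via \eqref{eq:Braced-n-prod} to
\[
x \;=\; \sum_{k\geq 0} \frac{(-D)^k}{k!}\, y_k \;=\; \sum_{k\geq 0} D^k a_k, \qquad a_k := \frac{(-1)^k}{k!}\, y_k \in A,
\]
a finite sum by locality. Uniqueness of this decomposition follows by applying $(\cdot)\oo{m}e$ to a hypothetical relation $\sum D^k a_k = 0$: the sesqui-linear formula $(D^n a)\oo{m}e = (-1)^n \frac{m!}{(m-n)!}(a\oo{m-n}e)$ combined with $a_k\in A$ isolates $(-1)^m m!\,a_m = 0$ at the top degree $m$, and induction finishes. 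Hence $C\simeq H\otimes_\Bbbk A$ as $H$-modules.

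Next I would equip $A$ with the binary operation $a\cdot b := a\oo{0}b$. Closure in $A$ is the case $n_{\text{outer}}=0$ of \eqref{eq:ConfAs-n}: $(a\oo{0}b)\oo{n}e = a\oo{0}(b\oo{n}e) = 0$ for $n\geq 1$. Associativity is the $n=m=0$ case of \eqref{eq:ConfAs-n}, and $e$ is a two-sided unit of $(A,\cdot)$ since $e\oo{0}a=a$ is given while $a\oo{0}e = a$ for $a\in A$ follows from $\{a\oo{0}e\}=a$ and the vanishing of the higher $y_s$. I would then define $\partial\colon A\to A$ by $\partial a := e\oo{1}a$. Expanding $e\oo{n}(e\oo{m}a)$ by \eqref{eq:ConfAs-n} and using $e\oo{n-s}e = 0$ for $s<n$ yields $e\oo{n}(e\oo{m}a) = e\oo{n+m}a$, hence inductively $\partial^n a = e\oo{n}a$; this is zero for large $n$ by locality \eqref{eq:C1}, so $\partial$ is locally nilpotent. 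The Leibniz rule reduces to $\partial(a\cdot b) = (e\oo{1}a)\oo{0}b + (e\oo{0}a)\oo{1}b = (\partial a)\cdot b + a\oo{1}b$, and $a\oo{1}b = a\oo{0}(e\oo{1}b) = a\cdot\partial b$ by \eqref{eq:ConfAs-n} together with $a\oo{0}e = a$.

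The desired $\lambda$-product formula is then immediate: for $a,b\in A$, the identity $a\oo{n}b = a\oo{0}(e\oo{n}b) = a\cdot\partial^n b$ gives $a\oo{\lambda}b = a\cdot e^{\lambda\partial}(b)$, and the full formula follows by extending via the sesqui-linearity axioms \eqref{eq:C2}--\eqref{eq:C3} to arbitrary $f(D)a$ and $g(D)b$. I expect the main obstacle to be guessing the correct subspace $A$ and the formula $\partial a = e\oo{1}a$; once these are in place, every verification is essentially a one-step application of \eqref{eq:ConfAs-n} or \eqref{eq:Braced-n-prod}, and neither the torsion-freeness nor the freeness of $C$ as an $H$-module needs to be established separately, since the decomposition $C=\bigoplus_{k\geq 0} D^k A$ is produced directly by the right-unit identity.
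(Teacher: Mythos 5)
The paper itself offers no proof of this statement: it is Theorem~\ref{thm:Retakh}, imported verbatim from \cite{Retakh2001} (with the remark that the original version was slightly weaker), so there is no in-paper argument to measure yours against. Judged on its own, your reconstruction is correct and is the natural intrinsic proof. The definition $A=\{a\in C\mid a\oo{n}e=0,\ n\ge 1\}$, the induction showing $x\oo{k}e\in A$, the expansion of $\{x\oo{0}e\}=x$ via \eqref{eq:Braced-n-prod} to get $C=\bigoplus_k D^kA$, the uniqueness argument via $(D^na)\oo{m}e$, and the identities $e\oo{n}(e\oo{m}a)=e\oo{n+m}a$ and $a\oo{n}b=a\oo{0}(e\oo{n}b)$ are all sound one-step applications of \eqref{eq:ConfAs-n} and \eqref{eq:C2}, and you are right that freeness of $C$ over $H$ falls out of the decomposition rather than needing a separate hypothesis. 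The only step you assert without verification is that $\partial=e\oo{1}(\cdot)$ actually maps $A$ into $A$ (more generally that $e\oo{n}a\in A$ for $a\in A$), which you need both for $\partial$ to be a derivation \emph{of $A$} and for $a\cdot\partial^nb$ to be a product of two elements of $A$. It is a one-line check in the same style as your closure argument: for $n\ge 1$ and $a\in A$,
\[
0=e\oo{1}(a\oo{n}e)=(e\oo{1}a)\oo{n}e+(e\oo{0}a)\oo{n+1}e=(e\oo{1}a)\oo{n}e .
\]
With that line added the argument is complete.
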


In particular, $a\oo{n} b = a\partial^n(b)$ for $a,b\in A$.
Conformal algebras of this type are called differential \cite{Retakh2001}.

If $C$ is an associative conformal algebra then the same
$H$-module with respect to the new products
\begin{equation}\label{eq:ConfCommutator}
[a\oo{n} b]=a\oo{n} b - \{b\oo{n} a\}, \quad a,b\in C,\ n\ge 0,
\end{equation}
is a Lie conformal algebra denoted by $C^{(-)}$. It is easy to
deduce from \eqref{eq:ConfAs}, \eqref{eqCC1}--\eqref{eqCC4} that
\begin{equation}\label{eq:CommDer}
[a\oo{\lambda } (b\oo{\mu} c)]
 = [a\oo{\lambda }b]\oo{\lambda +\mu} c
 +b\oo{\mu } [a\oo{\lambda } c].
\end{equation}

Given a Lie conformal algebra $L$, one may consider the category
$\mathcal E(L)$ of associative envelopes of $L$. The objects of
$\mathcal E(L)$ are pairs $(C,\iota)$, where $C$ is an associative
conformal algebra, $\iota : L\to C^{(-)}$ is a homomorphism of
conformal algebras, and $C$ is generated (as an associative
conformal algebra) by the set $\iota (L)$. Suppose $(C_1,\iota_1)$
and $(C_2,\iota_2)$ are two objects of $\mathcal E(L)$. A
homomorphism $\varphi: C_1\to C_2$ of conformal algebras is a
morphism of
$\mathcal E(L)$ if $\varphi\iota_1 = \iota_2: L\to C_2^{(-)}$.

The natural analogues of universal associative enveloping
algebras for Lie conformal algebras were introduced in \cite{Roitman2000}.
Suppose $L$ is a Lie conformal algebra generated by its subset $B$,
and let $N$ be a function from $B\times B$ to $\mathbb Z_+$.
Then there exists an associative envelope $(U_N(L), \iota_N)\in \mathcal E(L)$
satisfying the following property: For every
$(U,\iota)\in \mathcal E(L)$ such that $N_U(\iota(a),\iota(b))\le N$ for all
$a,b\in B$ there exists a morphism of envelopes
$\varphi : U_N(L)\to U$.

It was shown in \cite{Roitman2000} that there exist Lie
conformal algebras $L$ such that $\iota$ is not injective for every
$(C,\iota)\in \mathcal E(L)$, i.e., $L$ can not be embedded into
an associative conformal algebra.

The classical Poincar\'e---Birkhoff---Witt (PBW) Theorem for Lie algebras
states that the graded associative algebra $\mathrm{gr}\,U(\mathfrak g)$
(with respect to the natural
filtration) of the universal enveloping
algebra $U(\mathfrak g)$ of a Lie algebra $\mathfrak g$
is isomorphic to the symmetric algebra $S(\mathfrak g)$.
For conformal algebras, the analogue of the PBW Theorem would sound as follows
\cite{Roitman2000}.

If $L$ is a torsion-free finite Lie conformal algebra with a basis
$B$ over $H$ then every associative envelope $(U,\iota)\in \mathcal E(L)$
has a natural filtration
$U_1\subseteq U_2\subseteq \dots $
defined as follows:
$U_n$ is spanned over $H$ by all terms
$\iota(a_1)\oo{n_1}\dots \oo{n_{k-1}}\iota(a_k)$, $a_i\in B$, $1\le k\le n$,
$n_i\ge 0$, with any bracketing.
Since
$U_n\oo{\lambda } U_m\subseteq U_{n+m}[\lambda ]$ for all $n,m\ge 1$,
the graded associative conformal algebra
$\mathrm{gr}\,U$ can be defined in the ordinary way.

In \cite{Roitman2000}, the notion of the
{\em PBW property\/} for torsion-free
conformal Lie
algebras (with respect to a fixed basis $B$ over $H$) was
proposed: An algebra $L$ has the PBW property with respect
to $B$ if for any sufficiently large constant locality function
$N$ on $B\times B$
the graded universal enveloping associative
conformal algebra $\mathrm{gr}\, U_N(L)$ is isomorphic to
the free commutative conformal algebra generated by $B$ with
respect to the locality function~$N$.
Unfortunately, not all finite torsion-free Lie conformal algebras
have the PBW property.

\subsection{Conformal endomorphisms and representations of conformal algebras}
Let us recall the notion of a conformal endomorphism
\cite{Kac1997}. Consider a left unital module $M$ over $H$. By
$\End M$ we denote the associative algebra of $\Bbbk$-linear maps from
$M$ to itself.
A left (right) {\em conformal endomorphism\/} of $M$ is a
sequence $a=\{a_n\}_{n\ge 0}$, $a_n\in \End M$, such that for
every $u\in M$
we have $a_n(u) = 0$ for almost all $n\ge 0$, and
$[a_n, D]= na_{n-1}$ (respectively, $a_nD = -n a_{n-1}$).

Denote by $\Cend^l M$ ($\Cend^r M$) the set of all left (right)
conformal endomorphisms of $M$. These are linear spaces which are
also $H$-modules with respect to the actions defined by
\[
\begin{gathered}
(Da)_n = -na_{n-1},\quad a\in \Cend^l M,\ n\ge 0; \\
(Da)_n = Da_n + na_{n-1}, \quad a\in \Cend^r M, \ n\ge 0.
\end{gathered}
\]

If $M$ is a finitely generated $H$-module then both $\Cend^l M$
and $\Cend ^r M$ are associative conformal
algebras with respect to the following operations \cite{Kac1997}:
\[
\begin{gathered}
(a\oo{n} b)_m = \sum\limits_{s=0}^n (-1)^s\binom{n}{s}
a_{n-s}b_{m+s},
 \quad a,b\in \Cend^l M,\ n,m\ge 0; \\
(a\oo{n} b)_m= \sum\limits_{s=0}^m \binom{m}{s} b_{n+s} a_{m-s} ,
\quad
 a,b\in \Cend^r M,\ n,m\ge 0;
\end{gathered}
\]
These operations can be defined on $\Cend^l M$ and $\Cend^rM$ for
an arbitrary $H$-module $M$. Finiteness condition is needed to
guarantee the locality axiom \eqref{eq:C1} to hold.
If $M$ is an infinitely generated $H$-module then \eqref{eq:C1}
may not be true in $\Cend^l M$ or $\Cend^r M$, but it is still possible to define
$\{\cdot\oo{n}\cdot\}$, $n\ge 0$, via \eqref{eq:Braced-n-prod}: Infinite sum
converges to a conformal endomorphism. Therefore, the operations
\eqref{eq:ConfCommutator} are well-defined on $\Cend^l M$
or $\Cend^r M$.
 If $M$ is
finitely generated over $H$ then $(\Cend^l M)^{(-)}$
is a Lie
conformal algebra denoted by $\mathrm{gc}\, M$.

One may consider a conformal endomorphism $a\in \Cend^l M$ as a
formal power series
$$
a_\lambda = \sum\limits_{n\ge 0} \dfrac{\lambda ^n}{n!} a_n \in
(\End M)[[\lambda ]],
$$
where $a_\lambda (u) \in M[\lambda ]$ for every $u\in M$ and
$a_\lambda D = (D+\lambda)a_\lambda $. Then the operations of a
conformal algebra can be defined in terms of formal series as
\[
(Da)_\lambda = -\lambda a_\lambda, \quad (a\oo{\lambda } b)_\mu =
a_\lambda b_{\mu -\lambda},\quad a,b\in \Cend^l M.
\]

For $\Cend^r M$, a similar description can be stated. In this
case, a formal series $a_\lambda \in (\End M)[[\lambda ]]$
defines a right conformal endomorphism if
$a_\lambda (u) \in M[\lambda ]$ for every $u\in M$ and
 $a_\lambda D = -\lambda a_\lambda$. Conformal
operations are defined by
\[
(Da)_\lambda = (D+\lambda )a_\lambda , \quad (a\oo{\lambda }
b)_\mu =  b_{\lambda+\mu }a_{\mu},
\quad a,b\in \Cend^r M.
\]

\begin{prop}\label{prop:CendOp}
There exists an $H$-linear bijection
$\tau: \Cend^l M\to \Cend^r M$
such that
$\{\tau(b)\oo{n}\tau(a) \} = \tau(a\oo{n} b)$
for all $a,b\in \Cend^l M$, $n\ge 0$.
In particular, for a finitely generated $H$-module $M$
associative conformal algebras
$\Cend^l M$ and $(\Cend^r M)^{\mathrm{op}}$
are isomorphic.
\end{prop}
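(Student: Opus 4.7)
The natural candidate for $\tau$ is the conformal analogue of the reflection $\lambda\mapsto -D-\lambda$ that already appears in the braced product $\{a\oo{\lambda}b\} = (a\oo{-D-\lambda}b)$. For $a\in \Cend^l M$ and $u\in M$, I will set
\[
\tau(a)_\lambda(u) := a_{-D-\lambda}(u) = \sum_{n\ge 0}\frac{(-D-\lambda)^n}{n!}a_n(u),
\]
where $D$ acts on $a_n(u)\in M$ (equivalently, $\tau(a)_m(u) = \sum_{s\ge 0}(-1)^{m+s}D^s a_{m+s}(u)/s!$, a finite sum by locality of $a$). The first task is to check $\tau(a)\in \Cend^r M$: the identity $a_\lambda D = (D+\lambda)a_\lambda$ translates, after one shift of the index of summation, into the required $\tau(a)_\lambda D = -\lambda\,\tau(a)_\lambda$.

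I will then verify $H$-linearity (another index shift) and bijectivity. For the inverse $\sigma : \Cend^r M\to\Cend^l M$ I use the symmetric formula $\sigma(b)_\lambda(u) = \sum_n(-D-\lambda)^n b_n(u)/n!$; the same argument shows $\sigma(b)\in\Cend^l M$, and the two compositions $\sigma\tau$ and $\tau\sigma$ collapse via the binomial identity
\[
\sum_{n=0}^{r}\binom{r}{n}(-D-\lambda)^n D^{r-n} = (-\lambda)^r,
\]
whose conceptual content is the clean substitution formula $\tau(a)_{-D-\lambda}(v) = a_\lambda(v)$ for $v\in M$ (both occurrences of $D$ act on the same element of $M$ and cancel).

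The heart of the proof is the multiplicative identity $\{\tau(b)\oo{n}\tau(a)\} = \tau(a\oo{n}b)$, which I verify in $\lambda$-form on each $u\in M$. Using $(a\oo{\lambda}b)_\mu = a_\lambda b_{\mu-\lambda}$ in $\Cend^l M$, the right-hand side expands to
\[
\tau(a\oo{\lambda}b)_\mu(u) = \sum_{k\ge 0}\frac{(-D-\mu-\lambda)^k}{k!}\,a_\lambda b_k(u).
\]
For the left-hand side I apply $(c\oo{\lambda'}d)_\mu = d_{\lambda'+\mu}c_\mu$ in $\Cend^r M$ together with $\{c\oo{\lambda}d\} = (c\oo{-D-\lambda}d)$ to rewrite $\{\tau(b)\oo{\lambda}\tau(a)\}_\mu(u)$ as $\tau(a)_{-D-\lambda}(\tau(b)_\mu(u))$, and then invoke the collapse formula to replace $\tau(a)_{-D-\lambda}$ by $a_\lambda$. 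Commuting $a_\lambda$ past powers of $(-D-\mu)$ inside $\tau(b)_\mu(u)$ via $a_\lambda(-D-\mu) = (-D-\lambda-\mu)a_\lambda$ produces the same series. The main technical obstacle is the disciplined bookkeeping of three distinct actions of $D$ (on $M$, on $\Cend^l M$, and on $\Cend^r M$, which differ by shifts in the scalar parameter) and the correct interpretation of substituting operator-valued expressions into formal-power-series identities.

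The "in particular" assertion is then immediate: by definition of the opposite algebra, $a\oo{n}^{\mathrm{op}}b = \{b\oo{n}a\}$ in $(\Cend^r M)^{\mathrm{op}}$, so the key identity reads $\tau(a\oo{n}b) = \tau(a)\oo{n}^{\mathrm{op}}\tau(b)$; combined with the $H$-linear bijectivity from the previous step, this exhibits $\tau$ as an isomorphism $\Cend^l M\cong (\Cend^r M)^{\mathrm{op}}$ of associative conformal algebras whenever $M$ is finitely generated over~$H$.
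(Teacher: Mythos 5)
Your proposal is correct and follows essentially the same route as the paper: the same map $\tau(a)_\lambda = a_{-D-\lambda}$, the same verifications of membership in $\Cend^r M$, $H$-linearity, and self-inverse-type bijectivity, and the same reduction of both sides of the key identity to $a_\lambda b_{-D-\mu}$ (your ``collapse formula'' $\tau(a)_{-D-\lambda}=a_\lambda$ and the commutation $a_\lambda(-D-\mu)=(-D-\lambda-\mu)a_\lambda$ are exactly the steps the paper carries out via explicit binomial sums). No gaps.
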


\begin{proof}
Consider the map
\[
\begin{aligned}
&\tau: \Cend^l M \to \Cend^r M, \\
&\tau(a)_n = \sum\limits_{s\ge 0} \frac{(-1)^{n+s}}{s!} D^sa_{n+s},\quad
\quad a\in \Cend^l M,\ n\ge 0.
\end{aligned}
\]
The sum in this definition is formally infinite, but $\tau(a)_n$
is a well-defined linear map from $M$ to $M$ since for every $u\in M$ only a
finite number of $a_n(u) $ are nonzero.

In terms of formal power series, $\tau $ may be expressed as
$$
\tau(a)_\lambda = a_{-D-\lambda }.
$$

Let us check that $\tau(a)\in \Cend^r M$. Indeed,
$$
\tau(a)_\lambda D = a_{-\lambda - D} D =
(D+(-\lambda-D))a_{-\lambda-D} = -\lambda \tau(a)_\lambda .
$$
Moreover,
$\tau(Da)_\lambda = (Da)_{-D-\lambda}  = (D+\lambda )a_{-D-\lambda }
=(D+\lambda )\tau(a)_\lambda =(D\tau(a))_\lambda$,
so $\tau $ is $H$-linear.
Note that
$\tau $ is a bijection since $\tau^{-1}$ can be defined
by the very same formula: $\tau^{-1}(b)_\lambda =b_{-\lambda -D}$,
$b\in \Cend^r M$.

 Finally,
\begin{multline}\nonumber
\{\tau(b)\oo{\lambda } \tau(a)\}_\mu = (\tau(b)\oo{-D-\lambda }
\tau(a))_\mu = \sum\limits_{n \ge 0}
  \dfrac{1}{n!}((-D-\lambda)^{n} (\tau(b)\oo{n} \tau(a) ))_\mu  \\
= \sum\limits_{n \ge 0}
  \dfrac{1}{n!}(-D-\mu -\lambda)^{n} (\tau(b)\oo{n} \tau(a) )_\mu
= \sum\limits_{n,m,s\ge 0}
 \dfrac{1}{n!}\dfrac{1}{m!}
  (-D-\mu-\lambda)^n \mu^m \binom{m}{s} \tau(a)_{n+s} \tau(b)_{m-s} \\
= \sum\limits_{n,m,s\ge 0}
 \dfrac{1}{n!}\dfrac{1}{(m-s)!}\dfrac{1}{s!}
  (-D-\mu-\lambda)^n \mu^s \mu^{m-s}\tau(a)_{n+s} \tau(b)_{m-s} \\
= \sum\limits_{n,s\ge 0}
 \dfrac{1}{n!}\dfrac{1}{s!}(-D-\mu-\lambda)^n \mu^s \tau(a)_{n+s}
 \sum\limits_{m\ge 0}\dfrac{1}{m!}\mu^m \tau(b)_{m}
= \tau(a)_{-D-\lambda } \tau(b)_\mu =a_{\lambda }b_{-D-\mu }
\end{multline}
for every $a,b\in \Cend^l M$. On the other hand,
\begin{multline}\nonumber
\tau(a\oo{\lambda } b)_\mu = (a\oo{\lambda } b)_{-D-\mu } =
\sum\limits_{n,m\ge 0}
 \dfrac{1}{n!}\dfrac{1}{m!} (-D-\mu)^m \lambda ^n (a\oo{n} b)_m \\
= \sum\limits_{n,m,s\ge 0}
  \dfrac{1}{n!}\dfrac{1}{m!} (-1)^s
   \binom{n}{s} (-D-\mu)^m \lambda ^n a_{n-s}b_{m+s} \\
= \sum\limits_{m,s\ge 0}
 \dfrac{1}{m!}\dfrac{1}{s!} (-\lambda )^s(-D-\mu)^m
\left( \sum\limits_{n\ge 0} \dfrac{1}{n!}\lambda ^na_n\right)
  b_{m+s}
= \sum\limits_{m\ge 0}\dfrac{1}{m!} (-D-\mu-\lambda )^m  a_\lambda
b_m
 \\
= \sum\limits_{m\ge 0}\dfrac{1}{m!}
  a_\lambda (-D-\mu)^m b_m
= a_\lambda b_{-D-\mu }.
\end{multline}
\end{proof}

\begin{defn}[\cite{CK1997}]
A left (right) {\em representation\/}
of an associative conformal
algebra $C$ on an $H$-module $M$ is an $H$-linear map $\rho $ from $C$ to
$\Cend^l M$ ($\Cend^r M$) such that $\rho(a\oo{n} b) =
\rho(a)\oo{n} \rho(b)$ for all $a,b\in C$, $n\ge 0$.

For a Lie conformal algebra $C$, its representation is an
$H$-linear map  $\rho : C\to \Cend^l M$ such that
$\rho (a\oo{n} b) = [\rho(a)\oo{n} \rho(b)]$,
$a,b\in C$, $n\ge 0$.
\end{defn}

Proposition \ref{prop:CendOp} implies that left (right)
representations of an associative conformal algebra $C$
are in one-to-one correspondence with right
(left) representations of $C^{\mathrm{op}}$.
 Obviously,
there is no reason to distinguish left and right representations
for Lie conformal algebras.

A left representation $\rho $ of a (Lie or associative) conformal
algebra $C$ on an $H$-module $M$ can be described in terms of
operations $(\cdot \oo{n}\cdot ): C\otimes M\to M$, $n\ge 0$, defined as follows:
\[
a\oo{n} b  = \rho(a)_n b,\quad a\in C,\ b\in M.
\]
These operations
satisfy the analogues of \eqref{eq:C1}--\eqref{eq:C3} and
\eqref{eq:ConfAs-n} (for associative algebra) or
\eqref{eq:ConfJacobi-n} (for Lie algebra).
We will also use the formalism of $\lambda $-products for
representations of conformal algebras.

If $C$ has a left (right) representation on an $H$-module $M$ then
$M$ is called left (right) {\em conformal $C$-module}.

A representation $\rho $ (or the corresponding conformal module $M$)
is said to be {\em finite\/} if $M$ is a finitely generated $H$-module.
If $\rho $ is injective then $M$ is called faithful conformal module.

Consider a free finitely generated $H$-module $M$. If we fix a
system of $H$-linearly independent generators $e_1,\dots, e_n$ of
$M$ then $M$ can be identified with the linear space $H\otimes
\Bbbk^n$, and $\Cend^l M$ can be presented as the space of
matrices $\mathbb M_n(\Bbbk [D,x])\simeq \Bbbk[D,x]\otimes \mathbb
M_n(\Bbbk)$, where
\begin{equation}\label{eq:Cend-Action}
(f(D,x)\otimes A)_\lambda (h(D)\otimes u) = f(-\lambda, D)h(D+\lambda )\otimes  Au
\end{equation}
for $f\in \Bbbk[D,x]$, $A\in \mathbb M_n(\Bbbk )$, $ h\in H$, $u\in \Bbbk^n$. Then
\begin{equation}\label{eq:Cend-Product}
(f(D,x)\otimes A)\oo{\lambda } (g(D,x)\otimes B) =
f(-\lambda ,x)g(D+\lambda ,x+\lambda )\otimes AB
\end{equation}
for $f,g\in \Bbbk[D,x]$,
$A,B\in \mathbb M_n(\Bbbk )$ \cite{Kac1997, Retakh2001}.
It means that $\Cend^l M$ is a differential conformal algebra
(see Theorem \ref{thm:Retakh})
based on the algebra $A=\mathbb M_n(\Bbbk [x])$ with respect to
the ordinary derivation $\partial = d/dx$.

It is easy to note (see, e.g., \cite{K.2006a} that $\Cend^l_n
\simeq \Cend_n^r$. From now on, we will denote by $\Cend_n$ the
conformal algebra of left conformal endomorphisms of $H\otimes
\Bbbk^n$. In particular, $\Cend_1$ is just the Weyl conformal algebra
from Example \ref{exmp:WeylConf}.

Left and right ideals of $\Cend_n$ were described in
\cite{BKL2003}. All right ideals are of the form $\Cend_{P,n} = P(x)\mathbb
M_n(\Bbbk[D,x])$, where $P(x)$ is a matrix in $\mathbb
M_n(\Bbbk[x])$; all left ideals are of the form $\Cend_{n,P} =
\mathbb M_n(\Bbbk[D,x]) P(x-D)$.

\subsection{Free associative conformal algebras}
The class of associative conformal algebras is not a variety in
the ordinary sense. However, given a set $B$ of generators and a
function $N:B\times B \to \mathbb Z_+$, one may build free
associative conformal algebra $\ConfAs\langle B; N\rangle $
generated by $B$ with respect to the locality function $N$
\cite{Roitman1999}. This algebra has the following universal
property: For every associative conformal algebra $C$ and for
every map $\alpha : B\to C$ such that $N_C(\alpha(a),
\alpha(b))\le N(a,b)$ for all $a,b\in B$ there exists a unique
homomorphism $\varphi : \ConfAs\langle B; N\rangle \to C$ such
that $\varphi\vert_B =\alpha $.

The structure of $\ConfAs\langle B;N\rangle $ was completely
described in \cite{Roitman1999}. In particular, its linear basis
consists of {\em conformal monomials}
$D^s (a_1\oo{n_1} (a_2\oo{n_2} \dots (a_k\oo{n_k} a_{k+1})\dots ))$,
where $k,s\ge 0$, $a_i\in B$, $0\le n_i< N(a_i,a_{i+1})$.
It is easy to see that the right-justified monomials of the form
\begin{equation}\label{eq:RightNormedBasis}
u=D^s ((\dots (a_1\oo{n_1} a_2) \dots a_k)\oo{n_{k}}a_{k+1})
\end{equation}
with the same restrictions on and $n_i$
also form a linear basis of $\ConfAs\langle B;N\rangle $.
It is natural to denote
$k=\deg_B u$, $s=\deg_D u$. For every {\em conformal polynomial}
\[
f=\sum\limits_i \alpha_i u_i\in \ConfAs\langle B;N\rangle , \quad
\alpha_i\in \Bbbk,
\]
where each $u_i$ is of the form \eqref{eq:RightNormedBasis},
$\deg_B f$ ($\deg_D f$) is the maximal $\deg_B u_i$ ($\deg_D u_i$)
among all $u_i$ such that $\alpha _i\ne 0$.
If $\deg_D f = 0$ then $f$ is said to be {\em $D$-free\/}
conformal polynomial.

Assume $B$ is a linearly ordered set. Consider the following order
on the basic conformal monomials: If $u$ is of the form
\eqref{eq:RightNormedBasis} then define its {\em weight} as
$$
\mathrm {wt}\,(u) = (k+1,a_{k+1}, n_{k},a_{k}, \dots ,
a_2, n_1, a_1, s)
$$
and compare weights lexicographically, i.e.,
\begin{equation}\label{eq:RightNormedOrder}
u_1\le u_2 \iff \mathrm{wt}\,(u_1) \le \mathrm{wt}\,(u_2).
\end{equation}

\subsection{Gelfand---Kirillov dimension of conformal algebras}
In group and ring theory, the notions of growth of a system
(group, algebra, or module) and Gelfand---Kirillov dimension (as a
logarithmic measure of growth) play an important role. For
conformal algebras, the analogous notions were introduced in
\cite{Retakh2001}.

Suppose $C$ is a finitely generated conformal algebra, $V$ is a fixed
set of generators. Then, for every integer $n\ge 1$, the
$H$-submodule $V(n)$ of $C$ spanned by all conformal monomials in
$V$ of length $l\le n$ is finitely generated over $H$.
The {\em Gelfand---Kirillov dimension} (GK-dimension) of $C$ is defined as
$$
\mathrm{GKdim }\, C = \overline{\lim\limits_{n\to \infty}}
 \dfrac{\ln \mathrm{rank}\,V(n)}{\ln n}.
$$
This characteristic (real number or $\infty$) does not depend on
the choice of a generating set~$V$. If $C$ is infinitely
generated then its GK-dimension is equal to the
supremum of GK-dimensions of its finitely
generated subalgebras.

In particular, if $\mathrm{GKdim }\, C=0$ then $C$ is locally
finite. There are no conformal algebras of intermediate GK-dimension
between 0 and 1. If $\mathrm{GKdim}\, C=1$ then $C$ is said to be
of {\em linear growth}. For example, $\mathrm{GKdim}\,\Cend_n=1$.
Therefore, if an (associative or Lie) conformal algebra has a finite
faithful representation then its GK-dimension is equal to 0 or 1.

It was conjectured in \cite{Zelmanov2003} and proved
in \cite{K.2006a, K.2006b} that
the class of simple finitely generated associative conformal algebras of linear
growth coincides with the class of infinite simple associative conformal algebras
with finite faithful representation:
All these algebras are of the form $\Cend_{n,P}$, $\det P\ne 0$.
It is natural to conjecture that
all finitely generated associative conformal algebras of linear growth have
a finite faithful representation. As we show in this note, this is not true.

\subsection{Summary of results}

Consider the following statements about a torsion-free
associative conformal algebra $C$.

\begin{enumerate}

\item[(F)] $C$ is finite;
\item[(FR)] $C$ has a finite faithful left representation;
\item[(FFR)] $C$ has a finite faithful
left representation on a free $H$-module;
\item[(U)] $C$ can be embedded into an associative conformal algebra
that contains a right unit;
\item[(TU)] $C$ can be embedded into an associative conformal algebra
that contains a two-sided unit;
\item[(NA)] $C$ has no right annihilator,
i.e., $\{a\in C\mid C\oo{\lambda } a = 0 \}=0$.
\end{enumerate}

It is trivial that (TU) implies (U) and
(FFR) implies~(FR).
Also, it follows from the structure of $\Cend_n$
that (FFR) implies~(TU): The element $1\otimes I_n$, where $I_n$ is the
unit matrix, is a two-sided unit of $\Cend_n$.
It is easy to see (e.g., \cite{K.2006b}) that
(FR) together with~(NA) imply~(FFR).
In the same paper, it was shown that free associative
conformal algebra with uniform locality on generators
satisfies~(TU).

Moreover, it is easy to see that (F) and (U) imply~(FFR) and~(TU).
Indeed, if $C\subseteq C'$, where $C'$ contains a right unit $e$,
then $C'$ is torsion-free, and the left $C$-submodule $M$
generated in
$C'$ by the element $e$ is finite over $H$.
The corresponding left representation of $C$ on $M$ is
faithful since $e\in M$.

In this note, we will show
that (U) does not imply~(TU),
and (FR) does not imply~(FFR), in general.

We state an example of a finitely generated associative
conformal algebra $C$ of linear
growth that satisfies~(NA), but does not satisfy~(U).
In particular, $C$ does not satisfy~(FR).

Another example shows that even a locally finite torsion-free
associative conformal algebra $C$ may have no faithful finite
representation: We construct an example of such $C$ that does
not satisfy neither~(U), nor~(FR).

\begin{rem}
It is clear that all these statements above remain valid if
we exchange the attributes ``left'' and ``right'' for
representations, units, and annihilators.
\end{rem}

Our main result for associative conformal algebras
states that (F) implies (FFR). In particular, every
torsion-free finite associative conformal algebra
satisfies~(TU).
Being combined with the result of \cite{Roitman2005}, the
statement obtained allows to conclude that finite
nilpotent Lie conformal algebra has a finite faithful representation
on a free $H$-module.
We use another method to obtain more general fact: A finite solvable
Lie conformal algebra has a finite faithful representation
on a free $H$-module.

Also, we point out the following curious fact. For a finite Lie conformal algebra,
the Poincar\'e---Birkhoff---Witt Theorem (in the sense of
\cite{Roitman2000}) implies the existence of a finite faithful representation.

\section{Finite representations of associative conformal algebras}

In this section, we prove that a finite associative conformal
algebra $C$
has a finite faithful conformal module $M$ which is free
as an $H$-module. In particular, $C$ can be embedded into
$\Cend _n$ for an appropriate~$n$.

Next, we state some examples of torsion-free associative conformal
algebras that do not satisfy neither (U), nor (FR). One example is
a finitely generated algebra of linear growth, another one is
infinitely generated but locally finite (of zero GK-dimension).

Throughout the rest of the paper, the term ``representation'' stands for
left conformal representation.

\subsection{Finite conformal algebras}

\begin{thm}\label{thm:Finite_Ass}
Let $C$ be a torsion-free finite associative conformal algebra.
Then $C$ has a faithful representation on a
free finitely generated $H$-module.
\end{thm}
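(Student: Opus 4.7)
The plan is to follow the route sketched in the introduction: construct a torsion-free associative conformal algebra $\tilde C$ containing $C$ and possessing a right unit $e$, and then invoke the implication (F)$+$(U) $\Rightarrow$ (FFR) already noted there. Since $C$ is torsion-free and finitely generated over the principal ideal domain $H = \Bbbk[D]$, it is free over $H$ of some finite rank $n$; I fix an $H$-basis $e_1,\ldots,e_n$ and record the structure polynomials $P_{ij}^k(D,\lambda)\in\Bbbk[D,\lambda]$ from $e_i\oo{\lambda}e_j = \sum_k P_{ij}^k(D,\lambda)\,e_k$, so that the associativity of $C$ is encoded by a list of polynomial identities in the $P_{ij}^k$.

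The most naive attempt is $\tilde C = C \oplus He$ with $a \oo{\lambda} e$ defined to be the $H$-basis expansion of $a$ with $D$ formally replaced by $-\lambda$: if $a = \sum_i f_i(D)\,e_i$, set $a \oo{\lambda} e := \sum_i f_i(-\lambda)\,e_i \in C[\lambda]$. This respects sesquilinearity and yields the right-unit identity $\{a\oo{0}e\} = a$ automatically. The obstruction shows up at associativity: the compatibility $(a\oo{\lambda}b)\oo{\lambda+\mu}e = a\oo{\lambda}(b\oo{\mu}e)$ reduces to demanding $P_{ij}^k(-(\lambda+\mu),\lambda) = P_{ij}^k(D,\lambda)$, which holds only when every $P_{ij}^k$ is independent of $D$---the case of a current algebra. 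To handle the general, genuinely $D$-dependent case, one has to enlarge $He$ to a free $H$-module $H \otimes W$ over an auxiliary finite-dimensional $\Bbbk$-space $W$, and to define the new products $a \oo{n} e$, $e\oo{n}a$, $e\oo{n}e$ together with the action of $C$ on $H\otimes W$ so that the $D$-dependence of the $P_{ij}^k$ is absorbed into the action on $W$. The finiteness of $C$ enters here to uniformly bound $\deg_D P_{ij}^k$ via locality, which is what keeps $W$ finite-dimensional.

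The main obstacle, and the technical core of the argument, is verifying the associativity axiom \eqref{eq:ConfAs} for all triples drawn from $C \cup \{e\} \cup (H \otimes W)$; sesquilinearity, locality, and the right-unit identity are routine by construction. With the correct encoding the verification reduces to the associativity of $C$ itself together with combinatorial identities relating $e$ to its conformal shifts in $W$. Torsion-freeness of $\tilde C$ is automatic because the construction does not introduce $H$-torsion.

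Once $\tilde C$ has been built, the finish is the argument from the introduction: the $C$-submodule $M \subseteq \tilde C$ generated over $H$ by $e$ together with all $a \oo{n} e$ is finitely generated over $H$ (by finiteness of $C$ and the locality bound $N_{\tilde C}(a,e) < \infty$) and free because $\tilde C$ is torsion-free. If $a \in C$ acts trivially on $M$ then $a \oo{n} e = 0$ for every $n$, so $\{a \oo{0} e\} = 0$; the right-unit identity forces $a = 0$, and the left action $C \to \Cend^l M$ is the desired finite faithful representation on a free $H$-module.
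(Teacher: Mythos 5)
Your overall strategy---enlarge $C$ to an algebra with a right unit $e$ and then run the (F)$+$(U)$\Rightarrow$(FFR) argument from the introduction---is a legitimate route, and your diagnosis of why the naive extension $C\oplus He$ fails (associativity forces $P_{ij}^k(-(\lambda+\mu),\lambda)=P_{ij}^k(D,\lambda)$, i.e.\ $D$-independence of the structure polynomials) is correct. But the proof has a genuine gap exactly where the theorem's content lies: the extension is never constructed. You say one must ``enlarge $He$ to $H\otimes W$'' and define the products ``so that the $D$-dependence of the $P_{ij}^k$ is absorbed into the action on $W$,'' and then assert that the associativity check ``reduces to the associativity of $C$ itself together with combinatorial identities.'' No such $W$, no such products, and no such identities are exhibited, and it is not a routine verification: the whole difficulty of the theorem is to produce an extension in which (a) $C$ does not collapse, (b) no $H$-torsion appears, and (c) the action of $C$ on the new part is faithful. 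Asserting that a finite-dimensional $W$ with these properties exists is essentially restating the theorem.

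For comparison, the paper sidesteps explicit structure constants entirely. It writes $C=\ConfAs\langle B;N\rangle/I$, adjoins one new generator $v$ with locality $N'(v,x)=0$ and $N'(b,v)=M'$ (so $v$ is \emph{not} a unit---it left-annihilates everything), and passes to $C_{M'}=\ConfAs\langle B';N'\rangle/J$ with $J$ the ideal generated by $I$. The real work is then: Lemma \ref{lem:IProperties}(iii), giving a uniform bound $M$ on $\deg_D$ of the linear normal form $\hat u$ of any $D$-free monomial; Lemma \ref{lem:ZeroDiv}, a combinatorial statement about zero divisors against $v$ in the free algebra (proved via the leading-term order \eqref{eq:RightNormedOrder}); Lemmas \ref{lem:Embedding} and \ref{lem:TorFree}, showing $J\cap\ConfAs\langle B;N\rangle=I$ and that $C_{M'}$ is torsion-free; and finally the faithfulness of the $C$-module generated by $v$, which hinges on choosing $M'>M$ so that a hypothetical kernel element would force a nontrivial $H$-relation among the basis $B$, contradicting torsion-freeness. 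The right unit only appears \emph{a posteriori}, via (FFR)$\Rightarrow$(TU) through the embedding into $\Cend_n$. None of this machinery---in particular the degree bound $M$ and the choice $M'>M$, which is where finiteness of $C$ is genuinely used---has a counterpart in your write-up, so the proposal as it stands does not establish the theorem.
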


\begin{proof}
Let $B$ be a finite basis of $C$ over $H$. Then
\[
C\simeq \ConfAs\langle B;N\rangle /I,
\]
where $N: B\times B\to \mathbb Z_+$ is an appropriate locality
function,
$I$ is an ideal of the free associative conformal algebra
$\ConfAs\langle B;N\rangle$.

\begin{lem}\label{lem:IProperties}
{\rm (i)}
If $0\ne f\in I$ then $\deg_B f>1$.

{\rm (ii)}
For every $g\in \ConfAs\langle B;N\rangle \setminus I$
there exists unique $\hat g\in \ConfAs\langle B;N\rangle$
such that $\deg_B \hat g =1$ and $g-\hat g\in I$.

{\rm (iii)} There exists a constant $M$ such that
$\deg_D \hat u\le M$ for all $D$-free
$u\in \ConfAs\langle B;N\rangle $.
\end{lem}

\begin{proof}
(i) The images of elements of $B$ in $C$
are linearly independent over~$H$. Therefore, if
$f=\sum_i h_i(D)a_i \in I$ for some $h_i\in H$, $a_i\in B$ then
$f=0$.

(ii) It follows from the fact that $C$ is generated by $B$
as an $H$-module. The statement (i) implies uniqueness.

(iii)
Consider the multiplication table of $C$: All $n$-products of
elements $a,b\in B$ are equal in $C$ to $H$-linear
combinations of elements of~$B$.
Therefore,
\[
I\ni a \oo{n} b - \sum\limits_{c\in B} h_{a,b}^{c,n} (D) c,
 \quad a,b\in B,\
0\le n < N(a, b),\ h_{a,b}^{c,n} \in H.
\]
Denote by $M$ the maximal $\deg h_{a,b}^{c,n}$
that appears in this table.

It is enough to show that $\deg_D \hat u\le M$ for
all $D$-free monomials $u$ of the form
\eqref{eq:RightNormedBasis}.
Assume
\[
\mathrm{wt}\,(u) = (k,a_k, n_{k-1}, a_{k-1},\dots, n_1, a_1, 0).
\]
Proceed by induction on $k\ge 1$.
For $k=1,2$ the statement is obvious.
For $k>2$,
$u = v\oo{n_{k-1}} a_{k}$,
where $v$ is a $D$-free monomial of length $k-1$.
Then
$u - \hat v\oo{n_{k-1}} a_{k}\in I$,
but
\eqref{eq:C2} implies $\hat v \oo{n_{k-1}} a_{k}$
to be a $D$-free conformal
polynomial of degree two in $B$ (or $\hat v\oo{n_{k-1}} a_k=0$).
Therefore,
$\deg_D \hat u\le M$.
\end{proof}

Denote by $B'=B\cup \{v\}$, where $v\notin B$.
Fix a non-negative integer constant $M'$ and define
a function $N': B'\times B' \to \mathbb Z_+$ in the following way:
\[
N'\vert _{B\times B} = N, \quad
N'(v,x)=0\ \mbox{for}\ x\in B',\quad
N'(b,v) = M'\ \mbox{for}\ b\in B.
\]

Note that $\ConfAs\langle B;N\rangle $
is a conformal subalgebra of
$ \ConfAs\langle B';N'\rangle$
for any $M'$.

\begin{lem}\label{lem:ZeroDiv}
Suppose $f\in \ConfAs\langle B;N\rangle $, $m\ge 0$.
If
$f\oo{m} v = 0$ in $\ConfAs\langle B';N'\rangle$
then
\[
f = f_1 + D^{m+1}f_2,\quad f_1,f_2\in  \ConfAs\langle B;N\rangle,
\ \deg_D f_1\le m-M'.
\]
\end{lem}

\begin{proof}
As an arbitrary element of  $\ConfAs\langle B;N\rangle$,
$f$ can be presented in the form
\[
f = \sum\limits_{s\ge 0} D^su_s,\quad \deg_D u_s = 0.
\]
Without loss of generality, we may assume $f$ to be homogeneous
of degree $k\ge 1$ in $B$.

Assume there exist $s\ge 0$ such that
$\max\{-1,m-M'\}<s\le m$ and $u_s\ne 0$.
Choose the minimal such~$s_0$.
Let
$$
(\dots ((a_1\oo{n_1} a_2)\oo{n_2} a_2)\oo{n_3} \dots )\oo{n_{k-1}} a_k
$$
be the principal term of $u_{s_0}$ with respect to the order
\eqref{eq:RightNormedOrder}.
Then
$$
((\dots ((a_1\oo{n_1} a_2)\oo{n_2} a_2)\oo{n_3} \dots )\oo{n_{k-1}} a_k)\oo{m-s_0} v
$$
is a basic monomial of
$\ConfAs\langle B';N'\rangle $ which is a principal term of
$f\oo{m} v$.
But $f\oo{m} v = 0$, i.e., it has no principal term.
\end{proof}

Denote by $J$ the ideal of
$\ConfAs\langle B';N'\rangle$ generated by~$I$, and let
$C_{M'}$ stands for the quotient conformal algebra
$\ConfAs\langle B';N'\rangle/J$.

\begin{lem}\label{lem:Embedding}
The initial conformal algebra $C$ is a subalgebra
of $C_{M'}$ for every $M'\ge 0$.
\end{lem}

\begin{proof}
Due to the definition of $N'$, the set
\[
I+H(I\cdot v)
:= \bigg\{f + \sum\limits_{s\ge 0} D^s(g_s\oo{n_s} v)\mid
 f, g_s\in I, \, n_s\ge 0 \bigg\}
\]
is an ideal of $\ConfAs\langle B';N'\rangle$. Obviously,
$I+H(I\cdot v)=J$.
But for every $s\ge 0$ either all terms of $D^s(g_s\oo{n_s} v)$
contain $v$ or $g_s\oo{n_s} v =0$. Therefore,
$J\cap \ConfAs\langle B;N\rangle =I$, that proves the lemma.
\end{proof}

\begin{lem}\label{lem:TorFree}
Conformal algebra $C_{M'}$ is a torsion-free $H$-module.
\end{lem}

\begin{proof}
Assume there exists $f\in \ConfAs\langle B'; N'\rangle $
such that $f\notin J$ but $h(D)f\in J$ for some nonzero
$h\in H$.

As an arbitrary element of $\ConfAs\langle B';N'\rangle $,
$f$ can be presented in a form
$$
f= f_0 + \sum\limits_{s=0}^k D^s(g_s\oo{n_s} v),\quad
f_0, g_s\in \ConfAs\langle B;N\rangle ,\ n_s\ge 0.
$$
Let us choose such an element $f$ with minimal $k$. Then $k>0$
(otherwise, $C$ contains nonzero torsion),
and $g_k\oo{n_k} v\notin J$.

Suppose $\deg h=m $. Then
$$
h(D) f = h(D)f_0 +D^{k+m}(g_k\oo{n_k} v) + \sum\limits_{s=0}^{k+m-1} D^su_s,
$$
where $u_s$ are $D$-free elements of $\ConfAs\langle B';N'\rangle $.
It was shown in the proof of Lemma \ref{lem:Embedding} that
$J=I+H(I\cdot v)$. Hence, in particular,
there exist $g\in I$ and $l\ge 0$ such that
$g_k\oo{n_k} v = g\oo{l}v\in J$, a contradiction.
\end{proof}

Hence, one may consider $C_{M'}$ as a regular left conformal
module over its subalgebra~$C$.  Denote by $U$ its
conformal $C$-submodule generated by~$v$.
This is a finitely generated torsion-free $H$-module, hence,
$C$ has a finite representation on the free $H$-module~$U$.

It remains to show that if $M'>M$, where $M$ is the constant
from Lemma~ \ref{lem:IProperties}(iii), then $U$ is a faithful
conformal $C$-module.

Assume there exists $f\in \ConfAs\langle B;N\rangle $
such that $f\notin I$ but $f\oo{n} v \in J$ for all $n\ge 0$.
By Lemma \ref{lem:IProperties}(ii), there exists
\[
\hat f = \sum\limits_{s\ge s_0} D^s u_s , \quad f-\hat f\in I,\
u_s\in \Bbbk B,
\]
where $u_{s_0}\ne 0$, $s_0\ge 0$. Then
$$
\hat f\oo{s_0} v = (-1)^{s_0}s_0! u_{s_0} \oo{0} v \in J.
$$

Hence, there exists $0\ne a\in \Bbbk B$ such that
$a\oo{0} v\in J$.
But $J=I+H(I\cdot v)$, so there exist $g\in I$ and $m\ge 0$
such that $ g\oo{m} v = a\oo{0} v$, or
\[
((-1)^m m! g - D^m a)\oo{m} v = 0.
\]

By Lemma \ref{lem:ZeroDiv},
\[
(-1)^m m! g - D^m a = \sum\limits_{s\ge 0} D^s u_s,
  \quad \deg_D u_s =0,
\]
where $u_s\ne 0$ only for $s>m$ or for $s\le m-M'$.

Consider
\[
\tilde g = \sum\limits_{s\ge 0} D^s \hat u_s.
\]
By definition,
\[
\tilde g - ((-1)^m m! g - D^m a) \in I,
\]
and by Lemma~\ref{lem:IProperties}
we have
$\deg_B \hat u_s =1$, $\deg_D \hat u_s\le M$
for all nonzero $\hat u_s$.
Therefore, all terms of $\tilde g$ are linear in $B$,
and their degrees in $D$
are either $>m$ or $\le m-M'+M<m$. Also,
$\tilde g + D^m a \in I$,
so we obtain a non-trivial $H$-linear combination of elements of $B$
which is equal to zero in $C$. This contradicts to the condition
$\mathrm{Tor}_H C=0$.

Thus we have proved $U$ to be a finite faithful conformal $C$-module
(the action on $v$ is faithful). By Lemma \ref{lem:TorFree}, $U$
is a free $H$-module, so $C$ satisfies (FFR).
\end{proof}

Since (FFR) implies (TU), we obtain the following

\begin{cor}
A finite torsion-free associative conformal algebra
can be embedded into a
finitely generated associative conformal algebra
of no more than linear growth with a two-sided unit.
\end{cor}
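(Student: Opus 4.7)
The plan is to deduce the corollary directly from Theorem~\ref{thm:Finite_Ass} by placing $C$ inside a suitably finitely generated subalgebra of some $\Cend_n$. First, Theorem~\ref{thm:Finite_Ass} supplies a faithful representation of $C$ on a free $H$-module of finite rank~$n$, which, after identifying this module with $H\otimes \Bbbk^n$, gives an embedding $\rho\colon C\hookrightarrow \Cend_n$. As recorded in the discussion following \eqref{eq:Cend-Product}, $\Cend_n$ is differential with underlying algebra $\mathbb M_n(\Bbbk[x])$, and the element $e=1\otimes I_n$ is a two-sided conformal unit of $\Cend_n$ with $N_{\Cend_n}(e,e)=1$ (this is exactly the observation cited in the summary of results to show that (FFR) implies (TU)).

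Next, I would let $C'$ denote the conformal subalgebra of $\Cend_n$ generated by $\rho(C)\cup\{e\}$. Since $C$ is finite as an $H$-module, it admits a finite generating set as a conformal algebra; adjoining the single element $e$ keeps the generating set finite, so $C'$ is finitely generated as a conformal algebra. The identities $e\oo{0}x=x$ and $\{x\oo{0}e\}=x$ continue to hold for all $x\in C'\subseteq \Cend_n$, and $N_{C'}(e,e)\le N_{\Cend_n}(e,e)=1$, so $e$ is a two-sided unit of~$C'$ as well.

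Finally, it remains to control the growth of~$C'$. The preliminaries record that $\mathrm{GKdim}\,\Cend_n=1$, and from the supremum definition of the Gelfand--Kirillov dimension every finitely generated subalgebra of $\Cend_n$ has GK-dimension at most~$1$. Hence $\mathrm{GKdim}\,C'\le 1$, i.e.\ $C'$ is of no more than linear growth, and $\rho$ embeds $C$ into~$C'$ as required. I do not anticipate any genuine obstacle here: this corollary is essentially a repackaging of Theorem~\ref{thm:Finite_Ass} together with the standard structural properties of $\Cend_n$, so the only step that requires any thought is verifying that the ambient unit~$e$ restricts to a two-sided unit of the finitely generated subalgebra containing $\rho(C)$, which is immediate from \eqref{eqCC2} and \eqref{eqCC4}.
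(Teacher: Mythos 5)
Your proposal is correct and follows essentially the same route as the paper: Theorem~\ref{thm:Finite_Ass} gives (FFR), hence an embedding $C\hookrightarrow\Cend_n$, and the two-sided unit $1\otimes I_n$ together with $\mathrm{GKdim}\,\Cend_n=1$ does the rest. The only cosmetic difference is that you pass to the subalgebra generated by $\rho(C)\cup\{e\}$, whereas the paper simply takes all of $\Cend_n$, which is already finitely generated of linear growth.
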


\subsection{Infinite associative conformal algebras}

Here we state some examples to show that Theorem \ref{thm:Finite_Ass}
can not be expanded neither to finitely generated associative
conformal algebras of linear growth nor to locally finite
infinitely generated associative conformal algebras.

\begin{exmp}\label{exmp: LinGrowth,noFR}
Consider the free 1-generated $H$-module $M_1=Hv$. Then
\begin{equation}\label{eq:SplitNull}
 C = \Cend_1\oplus M_1
\end{equation}
is an associative conformal
 algebra with respect to the $\lambda $-product
$$
(a+u)\oo{\lambda } (b+w) = a\oo{\lambda } b + a_\lambda (w), \quad
a,b\in \Cend_1, \ u,w\in M_1
$$
(split null extension).
\end{exmp}

By the definition, $\Cend M_1=\Cend_1 \simeq \Bbbk[D,x]$, where
 $x\oo{0} v = Dv$, see \eqref{eq:Cend-Action}.
It is clear that $C$ is a finitely generated associative conformal
algebra of linear growth.

Assume $C$ is embedded into an associative conformal algebra
$C'$ that contains a right unit~$e$.
Then $N_{C'}(v,e)=n>0$ (otherwise, $\{v\oo{0} e\} = 0$),
but
$0=x\oo{0} (v\oo{n} e) = (x\oo{0} v)\oo{n} e = -n v\oo{n-1} e \ne 0$.
The contradiction obtained shows that such $C'$ does not exist,
i.e., $C$ does not meet~(U). Hence, $C^{\mathrm{op}}$ can not be embedded
into an associative conformal algebra with a left unit.

As a corollary, the algebra $C$ from \eqref{eq:SplitNull}
does not satisfy (FR). Indeed,
note that $C$ contains a left unit, so it satisfies~(NA).
Thus, if it has a finite
faithful representation then it also satisfies~(FFR).
But every conformal algebra with (FFR) also satisfies~(TU), which is
not true for~$C$.

Next, let us state an example of a conformal algebra that satisfies
(FR), but does not satisfy~(FFR).

\begin{exmp}\label{exmp:LinGrowth,FR,noFFR}
Assume $C$ is the algebra from Example~\ref{exmp: LinGrowth,noFR}.
Then $C^{\mathrm{op}}$ does not meet (TU), hence, it
does not satisfy~(FFR). Let us show that $C$ has finite faithful
right representation (on an $H$-module with nonzero torsion).
Then $C^{\mathrm{op}}$ would satisfy (FR), but not~(FFR).
\end{exmp}

Consider the right ideal $\Cend_{x,1}$ of $\Cend_1$, and
the free 1-generated $H$-module $M_1$ generated by an element~$v$.
Then
$$
C_1 = \Cend_{x,1}\oplus DM_1 ,
$$
is a right ideal of~$C$.

It is clear that $M=C/C_1$ is a 2-generated $H$-module, the generators
are $\bar 1 = 1+C_2$ and $\bar v = v+ C_2$. Moreover,
$\mathrm {Tor}_H(M) = \Bbbk \bar v\ne 0$.
However, the regular right representation induces
right representation of $C$ on $M$, which is obviously faithful
 ($\bar 1\oo{\lambda } f(D,x) = f(D+\lambda,\lambda )\bar 1$,
$\bar 1\oo{\lambda } f(D) v = f(\lambda )\bar v$).

Finally, let us state an example of a locally finite
torsion-free associative conformal algebra that does not
satisfy neither (U), nor (FR).

\begin{exmp}\label{exmp:LocFinite}
Consider the free 2-generated $H$-module $M_2$ generated by
two elements $e_1$, $e_2$. Identify $M_2$ with $H\otimes \Bbbk ^2$,
assuming $e_1=\begin{pmatrix} 1\\0\end{pmatrix}$,
 $e_2=\begin{pmatrix} 0\\1\end{pmatrix}$.
Then $\Cend M_2$ has natural matrix presentation
as $\mathbb M_2(\Bbbk[D,x])$.

Denote by $C_0$ the set of all matrices
$$
\begin{pmatrix} f(D) & g(D,x) \\ 0 & f(D) \end{pmatrix},
\quad f\in H,\ g\in \Bbbk [D,x].
$$
It follows from \eqref{eq:Cend-Product} that $C_0$
is a subalgebra of $\Cend M_2$.

Now, consider the split null extension
$$
C_{2} = C_0\oplus M_2.
$$
This is a torsion-free infinite associative conformal algebra which is
locally finite, and it has a left unit, i.e., satisfies~(NA).
\end{exmp}

Let us show that $C_{2}$ does not satisfy~(U).
Assume there exists $C'\supseteq C_{2}$ such that $C'$ contains a right
unit~$e$. Denote $n=N_{C'}(e_2,e)$, and note that $n>0$. Also, denote
$$
a_k = \begin{pmatrix} 0 & \frac{x^k}{k!} \\ 0 & 0\end{pmatrix}\in C_0,\quad k\ge 0.
$$
By definition, $a_k\oo{0} e_2 = \frac{1}{k!}D^k e_1$.
Since $N_{C'}(e_1,e)>0$, there exists $m\ge 0$ such that
$e_1\oo{m} e \ne 0$.
Consider
\[
0=a_n\oo{0} (e_2\oo{n+m} e) = (a_n\oo{0} e_2)\oo{n+m} e 
= \frac{1}{n!} D^n e_1 \oo{n+m} e =
(-1)^n\binom{n+m}{n} e_1\oo{m} e \ne 0.
\]
The contradiction obtained shows that $C_{02}$
can not be embedded into an associative conformal algebra with
a right unit. Hence it does not satisfy (FFR) and also~(FR).

\section{Finite solvable Lie conformal algebras}

The main purpose of this section is to show that a finite
torsion-free solvable Lie conformal algebra has a
finite faithful
representation on a free $H$-module.

Throughout this section, $\lambda $-product on a Lie conformal
algebra $L$ is denoted by
$[\cdot\oo{\lambda }\cdot ]:L\otimes L\to L[\lambda ]$.
For $n$-products we use similar notation:
$$
[a\oo{\lambda } b] = \sum\limits_{n=0}^{N_L(a,b)-1}
 \dfrac{\lambda ^n}{n!}[a\oo{n} b], \quad a,b\in L.
$$
If $V$ is a conformal $L$-module with respect to a representation
$\rho: L\to \Cend V$ then $\rho(a)_\lambda v\in V[\lambda ]$ is
simply denoted by $a\oo{\lambda } v$, $a\in L$, $v\in V$. By
$\Kerr V$ we denote the set $\{a\in L\mid a\oo{\lambda } V=0\}$.

\subsection{Double construction for representations}
Let us start with a remark demonstrating the main idea of
this subsection. Consider an ordinary finite-dimensional Lie
algebra $\mathfrak g$, and let $L=\Curr \mathfrak g$ be the
current conformal algebra over $\mathfrak g$. In order to show
that $L$ has a finite faithful (conformal) representation one may
use the classical Ado Theorem: The functor $\Curr $ allows to
raise a representation of $\mathfrak g$ to a representation of
$L$.

However, the same result can be easily obtained as follows.
Consider $\mathfrak g$ as a Leibniz algebra and construct its
finite faithful conformal representation $\rho $
\cite{K.2008b} on the free $H$-module
\[
V=H\otimes (\Bbbk 1\oplus \mathfrak g),
\]
where
\[
\rho(a)_\lambda  1 = \lambda a, \quad \rho(a)_\lambda  b = [a,b],
\quad a,b\in \mathfrak g.
\]
This is straightforward to check that $\rho $ can be extended
 to a conformal representation of~$L$ by
sesqui-linearity.  This
representation is obviously finite and faithful.

A similar idea leads to the following

\begin{thm}\label{thm:DoubleRepr}
Let $L$ be a Lie conformal algebra. Then $L$ has a finite faithful
representation (on a torsion-free module) if and only if there
exist two finite conformal (torsion-free) $L$-modules $V$, $M$,
and a $\Bbbk $-linear map
\[
\langle \cdot \oo{\lambda }\cdot \rangle: L\otimes V\to M[\lambda
]
\]
 such that:
\begin{itemize}
\item[(D1)] $\langle Dx \oo{\lambda } v\rangle
 = -\lambda \langle x\oo{\lambda }v \rangle$,
 $\langle x \oo{\lambda } Dv\rangle
 = (D+\lambda) \langle x\oo{\lambda }v \rangle$
 for all $x\in L$, $v\in V$;
\item[(D2)]
 $x\oo{\lambda } \langle y\oo{\mu } v\rangle
  - \langle y\oo{\mu} (x\oo{\lambda } v)\rangle
  =\langle [x\oo{\lambda } y]\oo{\lambda +\mu }v\rangle $
  for all $x,y\in L$, $v\in V$;
\item[(D3)]
 $\{a\in L\mid \langle a\oo{\lambda } V\rangle=0\}
 \cap\Kerr V\cap \Kerr M=0$.
\end{itemize}
\end{thm}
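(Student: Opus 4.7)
My plan is to address the two directions of the biconditional in turn.

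For the ``only if'' direction, given a finite faithful representation of $L$ on a torsion-free $H$-module $W$, I would take $V = M = W$ and define $\langle x\oo{\lambda} v\rangle := x\oo{\lambda} v$, the given $L$-action. Then (D1) is sesqui-linearity of the module action, (D2) is the module Jacobi identity $x\oo{\lambda}(y\oo{\mu} v) - y\oo{\mu}(x\oo{\lambda} v) = [x\oo{\lambda} y]\oo{\lambda+\mu} v$ (after rewriting), and (D3) collapses to $\Kerr W = 0$, which is exactly faithfulness.

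For the substantive ``if'' direction, given $(V, M, \langle\cdot\rangle)$ satisfying (D1)--(D3), I would construct a representation on $W := V \oplus M$, which is finite and torsion-free by inheritance from $V$ and $M$. The $L$-action on $W$ is defined by the semidirect-type twist
\[
x\oo{\lambda}(v, m) := \bigl(x\oo{\lambda} v,\ x\oo{\lambda} m + \langle x\oo{\lambda} v\rangle\bigr),
\]
where the first component uses the given $V$-module action, and the second combines the given $M$-module action with the bracket $\langle\cdot\rangle$ serving as an off-diagonal coupling. Sesqui-linearity of this new action is immediate from (D1) and the sesqui-linearities of the module actions on $V$ and $M$. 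Verifying the module Jacobi identity on $W$ is the central computation: after expanding $x\oo{\lambda}(y\oo{\mu}(v, m)) - y\oo{\mu}(x\oo{\lambda}(v, m))$, the $V$-component reproduces Jacobi on $V$, while the $M$-component splits into three groups---the ``pure-$M$'' piece $x\oo{\lambda}(y\oo{\mu} m) - y\oo{\mu}(x\oo{\lambda} m)$, which yields $[x\oo{\lambda} y]\oo{\lambda+\mu} m$ by Jacobi on $M$; the ``action-on-bracket'' terms $x\oo{\lambda}\langle y\oo{\mu} v\rangle - y\oo{\mu}\langle x\oo{\lambda} v\rangle$; and the ``bracket-of-action'' terms $\langle x\oo{\lambda}(y\oo{\mu} v)\rangle - \langle y\oo{\mu}(x\oo{\lambda} v)\rangle$. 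The combined second and third groups must equal $\langle [x\oo{\lambda} y]\oo{\lambda+\mu} v\rangle$, and this is precisely where (D2) is invoked---applied once for $(x, y, \lambda, \mu)$ and once for $(y, x, \mu, \lambda)$, then reconciled using the Lie-conformal anti-symmetry $[y\oo{\mu} x] = -[x\oo{-D-\mu} y]$ together with (D1) for $\langle\cdot\rangle$.

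Faithfulness is a clean consequence of (D3): if $a \in L$ annihilates $W$, then $a \in \Kerr V$ (from the $V$-component), $a \in \Kerr M$ (from the $M$-component evaluated at $(0, m)$), and $\langle a\oo{\lambda} V\rangle = 0$ (from the $M$-component evaluated at $(v, 0)$); by (D3) such an $a$ is zero. The main technical obstacle will be the careful accounting of all terms in the $M$-component of the Jacobi identity, where (D2) does not match the needed identity in a single application but only after one invokes both orderings $(x, \lambda) \leftrightarrow (y, \mu)$ and the conformal anti-symmetry of the bracket in $L$.
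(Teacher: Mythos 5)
Your ``only if'' direction and your faithfulness argument coincide with the paper's. However, your construction for the ``if'' direction contains a genuine error: the action $x\oo{\lambda}(v,m) = (x\oo{\lambda}v,\ x\oo{\lambda}m + \langle x\oo{\lambda}v\rangle)$ does \emph{not} satisfy the conformal Jacobi identity. Carrying out exactly the computation you outline, the $M$-component of $x\oo{\lambda}(y\oo{\mu}(v,m)) - y\oo{\mu}(x\oo{\lambda}(v,m))$ equals
\[
[x\oo{\lambda}y]\oo{\lambda+\mu}m
 + \bigl(x\oo{\lambda}\langle y\oo{\mu}v\rangle - \langle y\oo{\mu}(x\oo{\lambda}v)\rangle\bigr)
 + \bigl(\langle x\oo{\lambda}(y\oo{\mu}v)\rangle - y\oo{\mu}\langle x\oo{\lambda}v\rangle\bigr).
\]
The first parenthesis is $\langle[x\oo{\lambda}y]\oo{\lambda+\mu}v\rangle$ by (D2); the second is $-\langle[y\oo{\mu}x]\oo{\lambda+\mu}v\rangle$, which by anti-symmetry of the bracket and (D1) is again $+\langle[x\oo{\lambda}y]\oo{\lambda+\mu}v\rangle$. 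So the off-diagonal contribution is $2\langle[x\oo{\lambda}y]\oo{\lambda+\mu}v\rangle$, whereas your definition of the action of $[x\oo{\lambda}y]$ at level $\lambda+\mu$ on $(v,m)$ requires coefficient $1$. No constant rescaling of $\langle\cdot\rangle$ repairs the mismatch of $2c$ versus $c$ (except $c=0$, which destroys faithfulness).

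The paper's construction differs in exactly one place, and it is the crux: the coupling term carries a factor of $\lambda$, namely
\[
x\hat{\oo{\lambda}} v = x\oo{\lambda}v + \lambda\langle x\oo{\lambda}v\rangle,\qquad
x\hat{\oo{\lambda}} m = x\oo{\lambda}m .
\]
With this twist the two applications of (D2) enter the Jacobi computation with weights $\mu$ and $\lambda$ respectively, so the off-diagonal term becomes $(\lambda+\mu)\langle[x\oo{\lambda}y]\oo{\lambda+\mu}v\rangle$, which is precisely what the definition of $[x\oo{\lambda}y]\hat{\oo{\lambda+\mu}} v$ demands. Sesqui-linearity still follows from (D1) (the extra factor $\lambda$ is harmless), and faithfulness from (D3) exactly as you argue, since multiplication by $\lambda$ in $M[\lambda]$ is injective. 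With this single correction your outline becomes the paper's proof.
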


\begin{proof}
If $L$ has a finite faithful representation on a (torsion-free)
module $U$ then one may consider $V=M=U$ and let
$\langle \cdot \oo{\lambda} \cdot \rangle $
 be the module action of $L$ on
$U$. It is obvious that (D1)--(D3) hold.

Conversely, assume such $V$ and $M$ exist. Consider $U=V\oplus M$
and define an action
$$
(\cdot \hat{\oo{\lambda}} \cdot ): L\otimes U\to U[\lambda ]
$$
as follows:
\begin{equation}\label{eq:MV-Double}
\begin{gathered}
a\hat{\oo{\lambda }} v = a\oo{\lambda } v
  +\lambda \langle a\oo{\lambda } v\rangle , \\
a\hat{\oo{\lambda }} m = a\oo{\lambda } m
\end{gathered}
\end{equation}
for $a\in L$, $v\in V$, $m\in M$. It follows from (D1) that this
map is sesqui-linear. In order to check that \eqref{eq:MV-Double} defines a
conformal representation of $L$ it is enough to show
\begin{equation}\label{eq:DRep1}
x\hat{\oo{\lambda }} (y\hat{\oo{\mu}} v) - y\hat{\oo{\mu }}
(x\hat{\oo{\lambda }} v) =
 [x\oo{\lambda }y] \hat{\oo{\lambda +\mu }} v, \quad x,y\in L, \ v\in V.
\end{equation}
Indeed, (D2) implies
\begin{multline}\nonumber
x\hat{\oo{\lambda }} (y\hat{\oo{\mu}} v) - y\hat{\oo{\mu }}
(x\hat{\oo{\lambda }} v) = x\hat{\oo{\lambda }}
  ( y\oo{\mu } v + \mu \langle y\oo{\mu} v \rangle)
- y\hat{\oo{\mu }} ( x \oo{\lambda } v
 + \lambda \langle x\oo{\lambda } y\rangle )     \\
= x\oo{\lambda }(y\oo{\mu } v) - y\oo{\mu} (x \oo{\lambda } v) +
\lambda \langle x \oo{\lambda } (y\oo{\mu } v) \rangle
- \lambda y\oo{\mu} \langle x\oo{\lambda } y\rangle \\
+ \mu x\oo{\lambda }\langle y\oo{\mu} v \rangle - \mu \langle
y\oo{\mu} (x \oo{\lambda } v)\rangle = [x\oo{\lambda
}y]\oo{\lambda +\mu } v - \lambda \langle [y\oo{\mu} x]\oo{\lambda
+\mu} v \rangle + \mu \langle [x\oo{\lambda } y]\oo{\lambda +\mu}
v \rangle.
\end{multline}
It remains to note that
$$
\langle [y\oo{\mu} x]\oo{\lambda +\mu} v\rangle = -\langle
[x\oo{-D-\mu} y]\oo{\lambda +\mu} v\rangle =  -\langle
[x\oo{\lambda } y]\oo{\lambda +\mu} v\rangle,
$$
and then \eqref{eq:DRep1}  follows.

Finally, note that (D3) guaranties the representation obtained to
be faithful. If both $V$ and $M$ are torsion-free $H$-modules then
so is $U=V\oplus M$.
\end{proof}

\begin{rem}
The representation of $L=\Curr \mathfrak g$ described in the
beginning of this subsection appears from trivial $L$-module $V$ of
rank one and the regular module $M=L$.
\end{rem}

We may now deduce a condition (slightly stronger than just an
embedding of a Lie conformal algebra into an associative one) that
is sufficient for existence of a finite faithful representation.

\begin{cor}\label{cor:Envelope}
Let $L$ be a finite Lie conformal algebra. If for every $0\ne x\in
L$ there exists $(U,\iota )\in \mathcal E(L)$ such that $\iota
(x)\oo{\lambda } U\ne 0$ then $L$ has a finite faithful
representation.
\end{cor}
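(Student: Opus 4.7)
The plan is to apply Theorem \ref{thm:DoubleRepr}: it suffices to produce finite conformal $L$-modules $V$ and $M$, together with a $\Bbbk$-linear map $\langle\cdot\oo{\lambda}\cdot\rangle\colon L\otimes V\to M[\lambda]$ satisfying (D1)--(D3).

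First I would reduce to a single associative conformal algebra $U$ equipped with a Lie conformal algebra homomorphism $\iota\colon L\to U^{(-)}$ such that $\iota(a)\oo{\lambda}U\ne 0$ for every $0\ne a\in L$. The sets
\[
Z_{(U,\iota)}=\{a\in L:\iota(a)\oo{\lambda}U=0\},
\]
indexed by envelopes from the hypothesis, are $H$-submodules of $L$ with trivial intersection. Since $L$ is a finitely generated module over the Noetherian ring $H=\Bbbk[D]$, finitely many of them already intersect trivially; taking the direct product of the corresponding associative conformal algebras together with the diagonal Lie conformal homomorphism yields the desired single pair $(U,\iota)$. (The resulting $\iota$ need not make $U$ an envelope in the strict sense, but only the homomorphism property is used below.)

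The key observation is that if we were allowed to take $V=M=U$ with
\[
\langle a\oo{\lambda}b\rangle:=\iota(a)\oo{\lambda}b,
\]
then (D1) would follow from the sesquilinearity of left conformal endomorphisms; (D2) from the associativity \eqref{eq:ConfAs}, the identity \eqref{eqCC4}, and the fact that $\iota$ preserves brackets (so that $(\iota(x)\oo{\lambda}\iota(y))\oo{\lambda+\mu}v-\{\iota(y)\oo{\lambda}\iota(x)\}\oo{\lambda+\mu}v=\iota([x\oo{\lambda}y])\oo{\lambda+\mu}v$); and (D3) would hold trivially, since the first intersected set $\{a:\iota(a)\oo{\lambda}U=0\}$ is already zero by construction. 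Thus the \emph{only} obstruction is that $U$ is typically infinite as an $H$-module, so $V=U$ is not admissible in Theorem \ref{thm:DoubleRepr}.

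The main technical challenge, therefore, is to replace $U$ by finite conformal $L$-modules $V$ and $M$ without sacrificing (D3). I would address this by taking $V$ (and $M$) to be finite quotients $U/N$ by a cofinite sub-$L$-module $N\subseteq U$, chosen so that $\iota(a)\cdot(U/N)\ne 0$ for each nonzero $a\in L$. By Noetherianity of $L$ it suffices to find, for each of finitely many test elements of $L$, some such quotient on which the action is nontrivial, and then pass to the diagonal in the product. The hard part is to produce a single cofinite $N$—or, equivalently, a finite associative conformal subalgebra of $U$ containing $\iota(L)$ so that Theorem \ref{thm:Finite_Ass} can be applied directly—without losing the desired faithfulness. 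This step is the technical crux of the proof and is where the particular structure of the envelope (its natural PBW-type filtration and the locality axiom, both of which control how rapidly iterated left multiplications by $\iota(L)$ enlarge an $H$-submodule of $U$) must be exploited.
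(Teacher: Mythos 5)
Your setup is correct---the corollary is proved by feeding suitable $V$, $M$, and $\langle\cdot\oo{\lambda}\cdot\rangle$ into Theorem \ref{thm:DoubleRepr}, after first reducing to a single pair $(U,\iota)$ with $\iota(x)\oo{\lambda}U\ne 0$ for all $x\ne 0$---but the proof is not complete: the step you defer as ``the technical crux'' is the entire content of the argument, and the route you sketch for it (a cofinite sub-$L$-module $N\subseteq U$, or a finite conformal subalgebra of $U$ containing $\iota(L)$) is neither carried out nor obviously realizable. The idea you are missing is that $V$ and $M$ need not be quotients of $U$ under left multiplication at all. The paper takes $V=\iota(L)\subseteq U$ with the \emph{adjoint} action $x\oo{\lambda}\iota(y)=\iota([x\oo{\lambda}y])$, which is finite for free as an $H$-module image of the finite module $L$, and $M=\Span_{H}\{\iota(x)\oo{n}\iota(y)\mid x,y\in L,\ n\ge 0\}$ with the commutator action $[\iota(x)\oo{\lambda}\,\cdot\,]$, preserved by \eqref{eq:CommDer}; $M$ is finite because a finite $H$-basis $B$ of $L$ together with the locality bound leaves only finitely many products $\iota(b)\oo{n}\iota(b')$, $b,b'\in B$, to span it over $H$. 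The pairing is $\langle x\oo{\lambda}\iota(y)\rangle=\iota(x)\oo{\lambda}\iota(y)\in M[\lambda]$; then (D2) is exactly \eqref{eq:CommDer}, and (D3) holds because $U$ is generated by $\iota(L)$, so $\iota(x)\oo{\lambda}\iota(L)=0$ would force $\iota(x)\oo{\lambda}U=0$ by associativity. No finite quotient of $U$ is ever needed.

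A secondary flaw: your reduction to one $U$ via ``finitely many of the $Z_{(U,\iota)}$ already intersect trivially by Noetherianity'' is misstated, since Noetherian modules satisfy the ascending, not descending, chain condition ($\bigcap_{n}D^nH=0$ in $H$ with no finite subintersection zero). It can be repaired by noting that each $Z_{(U,\iota)}$ is saturated (because $\iota(h(D)x)\oo{\lambda}U=h(-\lambda)\iota(x)\oo{\lambda}U$), so the intersection may be computed among subspaces of the finite-dimensional space $\Bbbk(D)\otimes_H L$. The paper instead uses the descending chain of kernels $I_N$ of the universal envelopes $U_N(L)$, which stabilizes for the same saturation reason; either way this part is fixable, unlike the main gap above.
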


\begin{proof}
First, let us note that $L$ has to be torsion-free. Indeed,
$\iota(x)\oo{\lambda } U= 0$ for all $x\in \mathrm{Tor}_H L$,
$(U,\iota)\in \mathcal E(L)$. This is impossible if every $x\ne 0$
does not annihilate an appropriate associative envelope.

Next, let us show that there exists $(U,\iota)\in \mathcal E(L)$
such that $\iota(x)\oo{\lambda } U \ne 0$ for all $0\ne x\in L$.
Assume the converse, and choose a basis $B$ of $L$ over $H$.
Consider the family of universal associative envelopes
$(U_N(L), \iota_N)$ \cite{Roitman2000},
where $N$ is a positive integer
considered as an upper bound for the locality function on
$\iota_N(B)\subset U_N(L)$.
Then
$$
I_N = \{x\in L\mid \iota_N(x)\oo{\lambda } U_N(L) = 0\} \ne 0,
\quad N\ge 1,
$$
is a descending chain of ideals of $L$. Moreover, if $h(D)x\in
I_N$ for some $0\ne h\in H$ then $x\in I_N$. Therefore, $L/I_N$ is
a torsion-free $H$-module. Since $L$ is a finite conformal
algebra,
there exists $N_0$ such that $I_N=I_{N_0}$ for all $N\ge N_0$.
 An arbitrary associative envelope $(U, \iota)$
 is an image of $(U_N(L), \iota_N)$ for sufficiently large
$N$, so $\iota(I_{N_0})\oo{\lambda } U = 0$ for all $(U,\iota)\in
\mathcal E(L)$. Hence,
$I_{N_0}=0$.

Finally, choose an envelope $(U,\iota ) \in \mathcal E(L)$ such
that $\iota(x)\oo{\lambda } U \ne 0$ for every $0\ne x\in L$.
Consider $V=\iota(L)\subset U$ as a conformal $L$-module with
respect to the adjoint action: $x\oo{\lambda }\iota(y)=
\iota([x\oo{\lambda }y])$, $x,y\in L$. Denote by $M$ the
$H$-submodule of $U$ spanned by all $\iota(x)\oo{n}\iota(y)$,
$x,y\in L$, $n\ge 0$. Consider $M$ as a conformal $L$-module with
respect to the commutator action:
$$
x\oo{n}(\iota(y)\oo{m}\iota(z)) = [\iota(x)\oo{n} (\iota(y)\oo{m}
\iota(z))],
$$
$x,y,z\in L$, $n,m\ge 0$. Relation \eqref{eq:CommDer} implies that
$x\oo{n}M\subset M$ for all $x\in L$, $n\ge 0$.

Define
$$
\langle \cdot \oo{\lambda } \cdot \rangle:
 L\otimes V \to M[\lambda ]
$$
by the rule
$$
\langle x\oo{\lambda }\iota(y) \rangle = \iota(x)\oo{\lambda
}\iota(y), \quad x,y\in L.
$$

It remains to check that $V$, $M$, and $\langle\cdot \oo{\lambda }
\cdot\rangle$ satisfy the conditions of Theorem~
\ref{thm:DoubleRepr}. Indeed, (D1) holds by definition, (D2)
follows from \eqref{eq:CommDer}, and (D3) is guaranteed by the
choice of $U$: Since $U$ is generated by $\iota(L)$, we have
$\langle x\oo{\lambda } V\rangle =
 \iota(x)\oo{\lambda }\iota(L)\ne 0$ for every $0\ne x\in L$.
\end{proof}

\subsection{Central PBW property}

\begin{prop}
Let $L$ be a finite torsion-free Lie conformal algebra with
a basis $B$ over $H$. Assume $L$ has the PBW property with respect to $B$.
Then $L$ has a finite faithful representation on a free $H$-module.
\end{prop}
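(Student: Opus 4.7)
The plan is to apply Corollary~\ref{cor:Envelope}, which reduces the task to producing, for each $0\ne x\in L$, an associative envelope $(U,\iota)\in\mathcal E(L)$ with $\iota(x)\oo{\lambda} U\ne 0$. The natural candidate is $U = U_N(L)$ for a sufficiently large constant locality function $N$ on $B\times B$, as supplied by the PBW hypothesis.

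First I would invoke the PBW property: for $N$ large enough, $\mathrm{gr}\,U_N(L)$ is isomorphic to the free commutative conformal algebra on $B$ with locality~$N$. The degree-one component of $\mathrm{gr}\,U_N(L)$ is then the free $H$-module on $B$, so $\iota_N:L\to U_N(L)$ is injective. Given $0\ne x=\sum_i h_i(D)a_i\in L$ with distinct $a_i\in B$ and $h_i\in H$, I would fix any $b\in B$ and examine $\iota_N(x)\oo{\lambda}\iota_N(b)\in U_2[\lambda]$; its image in the degree-two part of $\mathrm{gr}\,U_N(L)$ equals $\sum_i h_i(-\lambda)(a_i\oo{\lambda} b)$, which is nonzero in the free commutative conformal algebra whenever $x\ne 0$, since a standard basis for the degree-two piece (right-justified monomials modulo the conformal commutativity relation $a\oo{\lambda}b=\{b\oo{\lambda} a\}$ and sesqui-linearity) separates the contributions indexed by the different $a_i$. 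Thus $\iota_N(x)\oo{\lambda}U_N(L)\ne 0$ for every $0\ne x\in L$, and Corollary~\ref{cor:Envelope} delivers a finite faithful representation of $L$.

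To upgrade ``finite'' to ``on a free $H$-module'', I would trace through the construction in the proof of Corollary~\ref{cor:Envelope}: the representation lives on $V\oplus M$, where $V=\iota_N(L)$ and $M$ is the $H$-submodule of $U_N(L)$ spanned by the products $\iota_N(a)\oo{n}\iota_N(b)$. Since $L$ is finite and torsion-free over the PID $H=\Bbbk[D]$, it is free, hence so is $V\cong L$. The PBW property forces $U_N(L)$ itself to be torsion-free (each graded component is a free $H$-module), so $M$ is torsion-free; sesqui-linearity together with the locality bound $N$ reduces a generating set of $M$ to the finite collection $\{\iota_N(a)\oo{n}\iota_N(b)\mid a,b\in B,\ 0\le n<N\}$, so $M$ is finitely generated, and therefore free. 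Hence $V\oplus M$ is the desired free $H$-module.

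The step I expect to be the main technical obstacle is verifying that $\sum_i h_i(-\lambda)(a_i\oo{\lambda} b)$ does not vanish in the degree-two part of the free commutative conformal algebra: this requires a concrete basis description for that component and careful bookkeeping with the conformal commutativity relation, to guarantee that distinct $a_i\in B$ contribute $H[\lambda]$-linearly independent classes.
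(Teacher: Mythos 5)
Your strategy coincides with the paper's: both proofs reduce the statement to Corollary~\ref{cor:Envelope} applied to the universal envelope $U_N(L)$ for a sufficiently large (constant) locality $N$, using the PBW hypothesis to identify $\mathrm{gr}\,U_N(L)$ with the free commutative conformal algebra $\mathfrak F_N(B)$ and then checking the hypotheses of the corollary together with freeness of the modules $V$ and $M$.

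The one substantive point you leave open --- and correctly flag as the main obstacle --- is exactly the point the paper settles by citation rather than by a direct basis argument. You need two facts about $\mathfrak F_N(B)$: that the degree-two classes $D^s(a\oo{n}b)$ separate the contributions of distinct $a_i\in B$ (so that $\sum_i h_i(-\lambda)(a_i\oo{\lambda}b)\ne 0$ whenever some $h_i\ne 0$), and that the graded components of $\mathfrak F_N(B)$ are torsion-free (hence free) $H$-modules, which you assert without justification when upgrading to a free module. Neither is immediate from the presentation of the free commutative conformal algebra by generators, sesqui-linearity and the relation $a\oo{\lambda}b=\{b\oo{\lambda}a\}$. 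The paper obtains both at once from Roitman's explicit embedding
\[
\mathfrak F_N(B)\subseteq \Curr\Bbbk\big[p_s^b\mid s=0,\dots,\lfloor N/2\rfloor,\ b\in B\big],
\qquad b\mapsto \sum_{s=0}^{\lfloor N/2\rfloor}\tfrac{1}{s!}(-D)^s\otimes p_s^b ,
\]
valid for odd $N$: inside the current algebra the images of the products $a_i\oo{\lambda}b$ involve distinct monomials $p_s^{a_i}p_t^{b}$, so the required linear independence is a one-line computation, and torsion-freeness of $\Curr$ of a polynomial algebra is obvious. To close your argument you should either import this embedding (as the paper does) or supply a proof of the basis/freeness claims for the low-degree components of $\mathfrak F_N(B)$; the rest of your proposal, including the reduction of $M$ to the finite generating set $\{\iota_N(a)\oo{n}\iota_N(b)\mid a,b\in B,\ 0\le n<N\}$ and the conclusion via Theorem~\ref{thm:DoubleRepr}, matches the paper's reasoning.
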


\begin{proof}
If for some odd integer $N>0$ the graded universal envelope
$\mathrm{gr}\, U_N(L)$ of $L$
is isomorphic to the free commutative conformal algebra
$\mathfrak F_N(B)$
generated by $B$ with locality
$N$ then $(U_N(L), \iota_N)$
satisfies the conditions of Corollary \ref{cor:Envelope}.

Moreover, it follows from \cite[Theorem 5]{Roitman2000} and the final remark
of \cite[Section~5]{Roitman2000} that for every odd $N>0$
there exists an embedding of conformal algebras
\[
\mathfrak F_N(B) \subseteq
\Curr \Bbbk\big [p_s^b\mid s=0,\dots , \lfloor N/2\rfloor,\, b\in B\big ],
\]
defined by
\[
b\mapsto \sum\limits_{s=0}^{\lfloor N/2\rfloor} \dfrac{1}{s!} (-D)^s\otimes p_s^b,
\quad b\in B.
\]
Therefore, $\mathfrak F_N(B)$ is a torsion-free conformal algebra, and the
modules $V$ and $M$ constructed in the proof of Corollary \ref{cor:Envelope}
are free.
Theorem \ref{thm:DoubleRepr} implies that $L$ has a faithful finite representation
on a free $H$-module.
\end{proof}

\begin{exmp}
There exist finite torsion-free Lie conformal algebras that have no PBW property.
For example, consider the Virasoro conformal algebra
$\mathcal Vir=Hx $ and its trivial module $V=He$ of rank one,
$x\oo{\lambda } e =0$.

Then the split central extension
$L = \mathcal Vir \oplus V$ has no PBW property with respect to $B=\{x,e\}$.
Indeed, for every $(U,\iota )\in \mathcal E(L)$
we have
\[
[\iota(x)\oo{0} (\iota(x)\oo{n} \iota(e))] = -n\iota(x)\oo{n-1} \iota(e), \quad n\ge 1.
\]
Hence, $N_U(\iota(x), \iota(e))=0$.
\end{exmp}

We are going to introduce another property
of the same flavor (called central PBW property) and
prove that it holds for finite solvable Lie conformal algebras.

Let $L$ be a finite torsion-free Lie conformal algebra. Consider
the space $\mathcal L = \Bbbk[t]\otimes_H L$, where $\Bbbk [t]$ is
a right $H$-module with respect to the action defined by
$f(t)\cdot D = -f'(t)$, $f\in \Bbbk[t]$. (This is just the
positive part of the coefficient algebra of $L$
\cite{Kac1997, Roitman1999}.)
Note that $\mathrm{lowdeg}\,(t^m\cdot f(D)) = m-\deg f$, where
$\mathrm{lowdeg}\,(g)$, $g\in \Bbbk[t]$, is the smallest $s$ such that
$t^s$ appears in $g(t)$.

Define the family of linear maps
\[
 (\cdot\oo{n} \cdot): L\otimes \mathcal L \to \mathcal L
\]
by the rule
\begin{equation}\label{eq:CentralAction}
x\oo{n} (t^m\otimes_H a) =
 \sum\limits_{s\ge 0} \binom{n}{s} t^{m+s}\otimes_H [x\oo{n-s} a],
 \quad x,a\in L,\ n,m\ge 0.
\end{equation}

\begin{lem}\label{lem:WellDefined}
The operations \eqref{eq:CentralAction} are well-defined.
\end{lem}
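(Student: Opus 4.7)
The plan is to verify that the formula \eqref{eq:CentralAction} respects the defining relation of the tensor product $\mathcal L = \Bbbk[t]\otimes_H L$. Since $H=\Bbbk[D]$ is generated by $D$, the only nontrivial check is that applying $x\oo{n}$ to the two expressions $(t^m\cdot D)\otimes_H a = -m\,t^{m-1}\otimes_H a$ and $t^m\otimes_H Da$ produces the same element of $\mathcal L$. Bilinearity in $x$ and $a$ is automatic, and the sum on the right of \eqref{eq:CentralAction} is finite since $[x\oo{n-s} a] = 0$ once $n-s<0$.

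First I would expand $x\oo{n}(t^m\otimes_H Da) = \sum_{s\ge 0}\binom{n}{s}\,t^{m+s}\otimes_H[x\oo{n-s}Da]$ and rewrite each bracket via sesqui-linearity \eqref{eq:C3} as $[x\oo{n-s}Da] = D[x\oo{n-s}a] + (n-s)[x\oo{n-s-1}a]$. Then I would push $D$ across the tensor product using $t^{m+s}\otimes_H D\,y = (t^{m+s}\cdot D)\otimes_H y = -(m+s)\,t^{m+s-1}\otimes_H y$, splitting the expansion into two sums; a shift $s\mapsto s-1$ in the second sum brings both into the common form $\sum_s c_s\,t^{m+s-1}\otimes_H[x\oo{n-s}a]$.

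The binomial bookkeeping then collapses via the standard identity $(n-s+1)\binom{n}{s-1} = s\binom{n}{s}$, so the combined coefficient of $t^{m+s-1}\otimes_H[x\oo{n-s}a]$ is $-(m+s)\binom{n}{s} + s\binom{n}{s} = -m\binom{n}{s}$, which matches the coefficient in $x\oo{n}(-m\,t^{m-1}\otimes_H a) = -m\sum_{s\ge 0}\binom{n}{s}\,t^{m+s-1}\otimes_H[x\oo{n-s}a]$. The argument is essentially routine, and the only mild obstacle is keeping the reindexing straight; conceptually, \eqref{eq:CentralAction} is engineered so that sesqui-linearity \eqref{eq:C3} precisely balances the right $H$-action $t^m\cdot D = -m\,t^{m-1}$ on $\Bbbk[t]$, so no further input is needed.
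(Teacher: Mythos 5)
Your proposal is correct and follows essentially the same route as the paper: expand $x\oo{n}(t^m\otimes_H Da)$ via sesqui-linearity, push $D$ through the tensor product using $t^{m+s}\cdot D=-(m+s)t^{m+s-1}$, and reindex so that the identity $(n-s+1)\binom{n}{s-1}=s\binom{n}{s}$ collapses the coefficients to $-m\binom{n}{s}$. This matches the paper's computation step for step.
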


\begin{proof}
For every $x,a\in L$, $n,m\ge 0$ we have
\begin{multline}\nonumber
x\oo{n} (t^m\otimes_H Da) = \sum\limits_{s\ge 0} \binom{n}{s}
t^{m+s} \otimes_H [x\oo{n} Da]
\\
= \sum\limits_{s\ge 0} \binom{n}{s} t^{m+s} \otimes_H
(D[x\oo{n-s} a] + (n-s)[x\oo{n-s-1} a])   \\
= -\sum\limits_{s\ge 0}
 (m+s)\binom{n}{s} t^{m+s-1}\otimes_H [x\oo{n-s} a]
+\sum\limits_{s\ge 0}
 (n-s)\binom{n}{s} t^{m+s} \otimes_H [x\oo{n-s-1} a]  \\
= -m \sum\limits_{s\ge 0} \binom{n}{s} t^{m+s-1}\otimes_H
[x\oo{n-s} a] -\sum\limits_{s\ge 1}
 s\binom{n}{s} t^{m+s-1}\otimes_H [x\oo{n-s} a]  \\
 +
 \sum\limits_{s\ge 0}(s+1)\binom{n}{s+1}
  t^{m+s} \otimes_H [x\oo{n-s-1} a]
= x\oo{n} (t^{m}D\otimes_H a).
\end{multline}
\end{proof}

\begin{lem}\label{lem:CentralActionProp}
The operations \eqref{eq:CentralAction}
  satisfy the following relations:
\[
\begin{gathered}
 Dx \oo{n} u = -n x\oo{n-1} u, \\
 x\oo{n} (y\oo{m} u) - y\oo{m} (x\oo{n} u) =
  \sum\limits_{r\ge 0} \binom{n}{r} [x\oo{n-r} y] \oo{m+r} u,
\end{gathered}
\]
$x,y\in L$, $u\in \mathcal L$, $n,m\ge 0$.
\end{lem}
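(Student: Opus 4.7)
The plan is to verify the two identities directly from the definition \eqref{eq:CentralAction}, reducing them to known identities in $L$ (sesqui-linearity for the first, Jacobi \eqref{eq:ConfJacobi-n} for the second) together with elementary binomial manipulations. Since both maps are $\Bbbk$-bilinear and every element of $\mathcal{L}$ is a $\Bbbk$-linear combination of elements of the form $t^m \otimes_H a$ with $a \in L$, it suffices to verify each identity on such generators.

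For the first identity, I would compute
\[
Dx \oo{n} (t^m \otimes_H a) = \sum_{s \geq 0} \binom{n}{s} t^{m+s} \otimes_H [Dx \oo{n-s} a]
\]
and apply \eqref{eq:C2} in $L$ to rewrite $[Dx \oo{n-s} a] = -(n-s)[x \oo{n-s-1} a]$; then the standard identity $(n-s)\binom{n}{s} = n\binom{n-1}{s}$ reindexes the sum to $-n \cdot x \oo{n-1}(t^m \otimes_H a)$.

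For the Jacobi identity, set $u = t^k \otimes_H a$ and expand both $x \oo{n}(y \oo{m} u)$ and $y \oo{m}(x \oo{n} u)$ by iterating \eqref{eq:CentralAction}. After applying the Lie conformal Jacobi identity \eqref{eq:ConfJacobi-n} of $L$ to the inner brackets $[x \oo{n-r}[y \oo{m-s} a]] - [y \oo{m-s}[x \oo{n-r} a]]$, the left-hand side becomes a triple sum
\[
\sum_{r,s,t \geq 0} \binom{m}{s}\binom{n}{r}\binom{n-r}{t}\, t^{k+r+s} \otimes_H \bigl[[x \oo{n-r-t} y] \oo{m-s+t} a\bigr].
\]
Collecting terms by $R = r+t$ and $S = r+s$, the inner bracket becomes $[[x \oo{n-R} y] \oo{m+R-S} a]$ and the power of $t$ becomes $t^{k+S}$. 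The combinatorial core is then the identity $\binom{n}{r}\binom{n-r}{R-r} = \binom{n}{R}\binom{R}{r}$ followed by Vandermonde's $\sum_{r=0}^{R}\binom{R}{r}\binom{m}{S-r} = \binom{m+R}{S}$, which collapses the triple sum to $\sum_{R,S}\binom{n}{R}\binom{m+R}{S}\,t^{k+S} \otimes_H [[x \oo{n-R} y] \oo{m+R-S} a]$ — precisely the expansion of the right-hand side $\sum_{R}\binom{n}{R}[x \oo{n-R} y] \oo{m+R} u$.

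The main obstacle is bookkeeping in the triple sum; apart from a careful reindexing and the Vandermonde collapse, no substantive structural input beyond the Lie conformal axioms in $L$ is required. In particular, Lemma~\ref{lem:WellDefined} has already ensured that the $H$-linearity in the $a$-slot is compatible with the definition, so no further check is needed for the identity to make sense on $\mathcal{L}$.
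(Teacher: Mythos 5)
Your proposal is correct and follows essentially the same route as the paper: both identities are verified on generators $t^m\otimes_H a$ by direct expansion, with \eqref{eq:C2} plus $(n-s)\binom{n}{s}=n\binom{n-1}{s}$ for the first, and \eqref{eq:ConfJacobi-n} followed by the trinomial revision $\binom{n}{r}\binom{n-r}{R-r}=\binom{n}{R}\binom{R}{r}$ and a Vandermonde collapse for the second, exactly as in the paper's change of variables $r=q+s$, $k=p+r-s$. The only (cosmetic) issue is your reuse of the letter $t$ both as a summation index and as the formal variable in $t^{k+r+s}$.
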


\begin{proof}
By definition, if $u= t^l\otimes _H a\in \mathcal L$, $a\in L$,
$l\ge 0$, then
\begin{multline}\nonumber
Dx\oo{n} u =
 \sum\limits_{s\ge 0} \binom{n}{s} t^{l+s}\otimes_H [Dx\oo{n-s} a]
 =
 -\sum\limits_{s\ge 0}(n-s) \binom{n}{s} t^{l+s}\otimes_H [x\oo{n-s-1}a]  \\
 =
 -n \sum\limits_{s\ge 0} \binom{n-1}{s} t^{l+s}\otimes _H [x\oo{n-s-1} a]
 =
 -n x\oo{n-1} u
\end{multline}
for all $x\in L$, $n\ge 0$.

Moreover, if $u\in \mathcal L$ as above, $x,y\in L$, $n,m\ge 0$
then
\[
x\oo{n}(y\oo{m} u) = \sum\limits_{p,q\ge 0}
\binom{m}{p}\binom{n}{q} t^{l+p+q}\otimes _H [x\oo{n-q} [y\oo{m-p}
a]].
\]
Similarly,
\[
y\oo{m}(x\oo{n} u) = \sum\limits_{p,q\ge 0}
\binom{m}{p}\binom{n}{q} t^{l+p+q}\otimes _H [y\oo{m-p} [x\oo{n-q}
a]].
\]
Hence,
\begin{multline}\label{eq:CenterJac}
x\oo{n}(y\oo{m} u)-y\oo{m}(x\oo{n} u) \\
= \sum\limits_{p,q\ge 0} \binom{m}{p}\binom{n}{q} t^{l+p+q}\otimes
_H
([x\oo{n-q} [y\oo{m-p} a]] - [y\oo{m-p} [x\oo{n-q} a]])  \\
= \sum\limits_{p,q,s\ge 0} \binom{m}{p}\binom{n}{q}\binom{n-q}{s}
t^{l+p+q}\otimes _H [[x\oo{n-q-s} y]\oo{m-p+s} a] \\
= \sum\limits_{p,r,s\ge 0} \binom{m}{p}\binom{n}{r}\binom{r}{s}
t^{l+p+r-s}\otimes _H [[x\oo{n-r} y]\oo{m-p+s} a]
\end{multline}
for $r=q+s$. Changing the summation variable to $k=p+r-s$ leads
\eqref{eq:CenterJac} to
\begin{multline}\nonumber
\sum\limits_{k,r,s\ge 0} \binom{n}{r}\binom{m}{k-r+s}\binom{r}{s}
t^{l+k}\otimes _H [[x\oo{n-r} y]\oo{m-k+r} a]  \\
= \sum\limits_{r,k\ge 0} \binom{n}{r}\binom{m+r}{k}
 t^{l+k}\otimes _H [[x\oo{n-r} y] \oo{m+r-k} a]
= \sum\limits_{r\ge 0}\binom{n}{r} [x\oo{n-r}y]\oo{m+r} u.
\end{multline}
\end{proof}

Sesqui-linearity \eqref{eq:C3} allows to expand
\eqref{eq:CentralAction} to a family of maps
$(\cdot \oo{n}\cdot ): L\otimes U \to U$,
where $U$ is the free $H$-module generated by
the space~$\mathcal L$. However, $U$ is not a conformal $L$-module
since the locality axiom \eqref{eq:C1} does not hold.

\begin{defn}
Suppose $L$ is a finite torsion-free Lie conformal algebra with a
basis $B$ over $H$. We say that $L$ satisfies {\em central PBW
property\/} with respect to $B$ if there exists a function $N:B\to
\mathbb Z_+$ such that $N(b)>0$ for all $b\in B$ and the subspace
\[
I(B,N) = \Span_{\Bbbk }\{t^m\otimes_H a\mid m\ge N(a), \, a\in B\}
  \subseteq \mathcal L
\]
is invariant under the action \eqref{eq:CentralAction} of $L$.
\end{defn}

\begin{exmp}
If $\mathfrak g$ is a finite dimensional Lie algebra with a basis
$B$ over $\Bbbk $ then $L =\Curr \mathfrak g$ has the central PBW
property with respect to $B$. Indeed, it is enough to consider
$N\equiv 1$

The Virasoro conformal algebra $\mathcal Vir$
 has no central PBW property. If $x$ is the generator of $\mathcal Vir$
 over $H$ then
 $x\oo{0} (t^m\otimes_H x )
 = t^m\otimes_H Dx = -mt^{m-1} \otimes_H x$,
 so $I(B,N)$ can not be invariant under \eqref{eq:CentralAction}
 except for $N\equiv 0$.
\end{exmp}

\begin{thm}\label{thm:ZPBW->FFR}
If $L$ satisfies central PBW property then $L$ has a finite
faithful representation on a torsion-free $H$-module.
\end{thm}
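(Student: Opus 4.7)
The plan is to apply the Double Construction (Theorem~\ref{thm:DoubleRepr}) with $V$ a trivial rank-one module and $M$ a suitable finite quotient of $\mathcal L$. Form $M = U/(H \otimes_\Bbbk I(B,N))$, where $U = H \otimes_\Bbbk \mathcal L$ carries the sesqui-linear extension of the action \eqref{eq:CentralAction}. The central PBW hypothesis guarantees that $I(B,N)$ is invariant under the $L$-action on $\mathcal L$, which lifts to invariance of $H \otimes_\Bbbk I(B,N)$ inside $U$; hence $M$ inherits an $L$-action. The quotient $M$ is a free $H$-module of rank $\sum_{b \in B} N(b)$, so it is finite and torsion-free. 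The conformal-module axioms other than locality are inherited from Lemma~\ref{lem:CentralActionProp}. For locality I would first derive the compact identity
\[
x \oo{\lambda}(t^m \otimes_H a) = t^m e^{\lambda t} \otimes_H [x \oo{\lambda} a],
\]
formal in $\mathcal L[[\lambda]]$, directly from \eqref{eq:CentralAction}. Then for $n$ large enough (depending on $N$, on $N_L(x,a)$, and on the $D$-degrees of the coefficients of the structure constants of $L$), every term of $x \oo{n}(\overline{t^m \otimes_H a})$ has $t$-exponent past the truncation threshold of $I(B,N)$ and hence vanishes in $M$.

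Next, take $V = Hv_0$ with trivial action $x \oo{\lambda} v_0 = 0$ and define
\[
\langle x \oo{\lambda} v_0 \rangle := \overline{e^{\lambda t} \otimes_H x} \in M[\lambda],
\]
extended to $V$ by the rule $\langle x \oo{\lambda} h(D) v_0 \rangle = h(D + \lambda)\langle x \oo{\lambda} v_0 \rangle$. Axiom (D1) follows from the right $H$-action identity $e^{\lambda t} \cdot D = -\lambda e^{\lambda t}$ on $\Bbbk[t]$. Since $V$ is trivial, Axiom (D2) reduces to the identity
\[
x \oo{\lambda}\bigl(\overline{e^{\mu t} \otimes_H y}\bigr) = \overline{e^{(\lambda + \mu) t} \otimes_H [x \oo{\lambda} y]},
\]
which I would verify by expanding both sides via \eqref{eq:CentralAction}, re-indexing the triple sum with $p = m + s$, and applying the binomial identity $(\lambda + \mu)^p = \sum_{m=0}^{p} \binom{p}{m} \lambda^{p-m} \mu^m$. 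For (D3), writing $a = \sum_b h_b(D) b$ the coefficient of the $H$-basis vector $\overline{1 \otimes_\Bbbk b}$ in $\langle a \oo{\lambda} v_0 \rangle$ is $h_b(-\lambda)$; so $\langle a \oo{\lambda} V \rangle = 0$ forces $h_b = 0$ for every $b$, hence $a = 0$, and the intersection in (D3) is trivial regardless of $\Kerr V$ or $\Kerr M$. Theorem~\ref{thm:DoubleRepr} then produces a finite faithful representation of $L$ on the free torsion-free $H$-module $V \oplus M$.

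The main obstacle is the verification of (D2): its clean form depends on the formal identity $e^{\lambda t} \cdot e^{\mu t} = e^{(\lambda + \mu) t}$ surviving the passage to $M[\lambda, \mu]$. That is, although both sides are a priori only formal power series in $\lambda$ and $\mu$ with values in $\mathcal L$, truncation by $I(B,N)$ makes them genuine polynomials of bounded degree, and the polynomial identity must be traced carefully through this truncation. A secondary difficulty is locality in $M$ itself, which fails in $U$ and depends essentially on the invariance provided by the central PBW hypothesis.
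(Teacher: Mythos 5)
Your proposal is correct and follows essentially the same route as the paper: the same trivial rank-one module $V$, the same quotient module $M=H\otimes(\mathcal L/I(B,N))$ with action induced by \eqref{eq:CentralAction}, the same pairing $\langle x\oo{n} u\rangle = t^n\otimes_H x \bmod I(B,N)$, and the same appeal to Theorem~\ref{thm:DoubleRepr}, with invariance of $I(B,N)$ giving well-definedness and the degree count giving locality. Your packaging of the computations via $e^{\lambda t}$ and your extraction of the coefficient of $1\otimes_H b$ for (D3) are only cosmetic variations on the paper's coefficient-wise verification.
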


\begin{proof}
Suppose $I(B,N)\subseteq \mathcal L$ is invariant under the action
defined by \eqref{eq:CentralAction}. Let $V=Hu$ be the free
1-generated $H$-module and $M=H\otimes (\mathcal L/I(B,N))$ be the
free $H$-module generated by the finite-dimensional space
$\mathcal L/I(B,N)$.

One may consider $V$ as an $L$-module with respect to the trivial
action $x\oo{\lambda } u =0$, $x\in L$. Note that
\eqref{eq:CentralAction} induces the family of operations $(\cdot
\oo{n}\cdot): L\otimes M \to M$, $n\ge 0$, satisfying
\eqref{eq:C2}, \eqref{eq:C3}, and \eqref{eq:ConfJacobi-n}.
Moreover, these operations also satisfy \eqref{eq:C1} since for
every $w\in \mathcal L$ and for every $x\in L$ we have
$x\oo{n} w \in I(B,N)$
for sufficiently large~$n$. Hence, $M$ is a finite
torsion-free conformal $L$-module.

Define $\langle \cdot\oo{n} \cdot\rangle: L\otimes V \to M$, $n\ge
0$,  in the following way: Set
\begin{equation}\label{eq:Angles}
\langle x\oo{n} u\rangle = 1\otimes(t^n\otimes_H x+I(B,N)), \quad
x\in L,
\end{equation}
and expand to the entire $V$ by the sesqui-linearity (see (D1)
 in Theorem \ref{thm:DoubleRepr}).
Given $x\in L$, the space $I(B,N)$ contains $t^n\otimes_H x$ for
almost all $n\ge 0$, so we may define
$$
\langle x\oo{\lambda } v \rangle =
 \sum\limits_{n\ge 0} \dfrac{\lambda^n}{n!} \langle
  x\oo{n} v\rangle \in M[\lambda ], \quad x\in L,\ v\in V.
$$
It follows from the construction that $\langle Dx\oo{n} v\rangle =
-n\langle x\oo{n-1} v\rangle$, so the condition~(D1)
of Theorem~\ref{thm:DoubleRepr} holds. Later we will identify notations for
$w\in \mathcal L$ and $1\otimes (w+I(B,N))\in M$.

Let us check (D2). It is enough to consider $v=u\in V$. By
\eqref{eq:Angles},
\[
x\oo{n} \langle y\oo{m} u \rangle
- \langle y\oo{m} (x\oo{n} u) \rangle
= \sum\limits_{s\ge 0}\binom{n}{s} t^{m+s} \otimes_H [x\oo{n-s} y]
= \sum\limits_{s\ge 0} \binom{n}{s}
 \langle [x\oo{n-s} y] \oo{m+s} u\rangle,
\]
for all $x,y\in L$, $n,m\ge 0$. This implies (D2).

Finally, assume that $\langle x\oo{\lambda } u\rangle =0$ for some
nonzero $x=\sum_{i} h_i(D)b_i \in L$, $h_i\in H$, $b_i\in B$. Then
$x$ can be rewritten in the form
\[
x= \sum\limits_{s=n}^m D^s y_s, \quad y_s\in \Bbbk B, \ y_n,y_m\ne 0.
\]
Therefore, $0=\langle x\oo{n} u\rangle = t^n\otimes_H D^ny_n =
 (-1)^nn! 1\otimes _H y_n$, but $1\otimes_H y_n\notin I(B,N)$
 if $0\ne y_n\in \Bbbk B$ and $N(b)>0$ for all $b\in B$.
Hence, the condition (D3) of Theorem \ref{thm:DoubleRepr} holds,
so $L$ has finite faithful representation on the free $H$-module
$V\oplus M$.
\end{proof}

\subsection{Representations of solvable algebras}

Suppose $L$ is a Lie conformal algebra. Denote by $L'$ the
subspace of $L$ spanned by all elements $[x\oo{n} y]$, $x,y\in L$,
$n\ge 0$. It is clear that $L'$ is an ideal of~$L$. Define the
chain of ideals $L^{(n)}$ as follows:
\[
L^1=L,\quad L^{(n+1)}= (L^{(n)})',\ n\ge 1.
\]
If there exists $n> 1$ such that $L^{(n)}=0$ then $L$ is said to
be solvable.

The structure of finite solvable Lie conformal algebras was
described in \cite[Theorems 8.4, 8.5]{DK1998}.

\begin{thm}[Conformal version of Lie's Theorem,
               \cite{DK1998}]\label{thm:DK_Solvable}
Let $L$ be a finite torsion-free solvable Lie conformal algebra,
and assume the base field $\Bbbk $ is algebraically closed. Then
there exists a sequence of ideals
\[
0=L_0\subset L_1\subset \dots \subset L_n = L
\]
such that $L_{i+1}/L_i$ is a free $H$-module of rank one for each
$i=0,\dots ,n-1$. Moreover, the adjoint representation of $L$ on
itself induces the structure of a weight conformal $L$-module on
$L_{i+1}/L_i = H\bar a$, i.e., there exists a linear map $\varphi:
L\to \Bbbk[\lambda ]$, $\varphi: x\mapsto \varphi_x(\lambda )$,
such that
\[
[x\oo{\lambda} \bar a] = \varphi_x(\lambda )\bar a.
\]
\end{thm}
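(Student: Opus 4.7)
The plan is to argue by induction on the $H$-rank $n$ of $L$. Since $L$ is finite and torsion-free over the PID $H=\Bbbk[D]$, it is a free $H$-module of some rank $n$. For $n=1$ solvability forces $L$ to be abelian (the only non-abelian torsion-free Lie conformal algebras of rank one are $\mathcal Vir$ and its central extensions, both of which satisfy $[L,L]=L$), so the filtration $0\subset L$ with $\varphi\equiv 0$ works. For the inductive step it is enough to exhibit a nonzero $H$-pure rank-one ideal $L_1\subseteq L$ that is a weight module for $\operatorname{ad}L$: the quotient $L/L_1$ is then finite, torsion-free and solvable of rank $n-1$, and the induction hypothesis applied to $L/L_1$ yields, via preimages, the required chain in $L$.

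To produce $L_1$ I would pass to the last nonzero term $A=L^{(k)}$ of the derived series; this is an abelian ideal of $L$ and hence a finite conformal $L$-module under $\operatorname{ad}$ on which $A$ itself acts trivially. Constructing $L_1$ thus reduces to the following \emph{conformal weight-vector lemma}: for any finite torsion-free solvable Lie conformal algebra $L$ and any nonzero finite torsion-free conformal $L$-module $V$ over the algebraically closed field $\Bbbk$, there exists $0\ne\bar a\in V$ with $H\bar a$ a free rank-one $H$-submodule such that $x\oo{\lambda}\bar a=\varphi_x(\lambda)\bar a$ for a linear map $\varphi:L\to\Bbbk[\lambda]$. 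Applied to $V=A$ and followed, if necessary, by $H$-purification to preserve torsion-freeness of the quotient, this yields the desired $L_1$.

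The weight-vector lemma is proved by a secondary induction on the derived length of $L$. The abelian base case rests on the observation that, by \eqref{eq:ConfJacobi} with $[x\oo{\lambda}y]=0$, the $\lambda$-actions of elements of $L$ on $V$ pairwise commute, so a finite $H$-generating set can be simultaneously diagonalized on a suitable cyclic $H$-subquotient of $V$, with algebraic closure of $\Bbbk$ supplying the scalar roots that assemble into $\varphi_x(\lambda)$. For the inductive step one sets $L'=[L,L]$, invokes the hypothesis on $L'$ to find a weight $\varphi':L'\to\Bbbk[\lambda]$ realized on some $0\ne v\in V$, shows that the weight subspace
\[
V_{\varphi'}=\{w\in V\mid y\oo{\mu}w=\varphi'_y(\mu)w\ \text{for all}\ y\in L'\}
\]
is invariant under the action of $L$, and finally diagonalizes a single generator of $L/L'$ on $V_{\varphi'}$ to obtain a common $L$-weight vector.

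The main obstacle is precisely the $L$-invariance of $V_{\varphi'}$. The conformal analog of the classical identity $yxv=xyv-[x,y]v$ is the Jacobi relation
\[
y\oo{\mu}(x\oo{\lambda}v)=x\oo{\lambda}(y\oo{\mu}v)-[x\oo{\lambda}y]\oo{\lambda+\mu}v,\qquad y\in L',\ x\in L,\ v\in V_{\varphi'},
\]
which, together with $[x\oo{\lambda}y]\in L'[\lambda]$ and the $L'$-stability of $V_{\varphi'}$, reduces invariance to the vanishing of the polynomial weight $\varphi'_{[x\oo{\lambda}y]}(\lambda+\mu)$ on $V_{\varphi'}$. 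Classically this is the step where finite-dimensionality and characteristic-zero trace identities force $\varphi'([x,y])=0$; in the conformal setting one must run the analogous finiteness argument for polynomial-valued weights in $\Bbbk[\lambda,\mu]$, and it is here that the hypotheses of solvability, finiteness, torsion-freeness and algebraic closure all enter simultaneously.
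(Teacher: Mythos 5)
First, note that the paper does not prove this theorem at all: it is imported verbatim from \cite[Theorems 8.4, 8.5]{DK1998}, so there is no in-paper argument to compare yours against. Judged on its own terms, your proposal is the classical Lie-theorem blueprint (induct on rank, peel off a rank-one weight ideal inside the last nonzero derived term, prove a weight-vector lemma by induction on derived length, show weight spaces are $L$-invariant). The outer reduction is essentially sound --- including the point, which you should still check, that $H$-purification of $H\bar a$ either preserves the weight or forces it to be zero, since $h(D+\lambda)g_x(\lambda,D)=\varphi_x(\lambda)h(D)$ with $h$ nonconstant forces $\varphi_x=0$. But the entire content of the theorem sits inside your ``conformal weight-vector lemma,'' and that is exactly what you do not prove.

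Two steps genuinely fail as described. (a) The abelian base case: the operators to be ``simultaneously diagonalized'' are the infinite commuting family $\{x_n\}_{x\in L,\,n\ge 0}$ acting on $V$, which is finitely generated over $H$ but infinite-dimensional over $\Bbbk$, and these operators do not commute with $D$ (indeed $[D,x_n]=-nx_{n-1}$); the classical common-eigenvector argument for commuting operators on a finite-dimensional space does not apply, and one must additionally show that the eigenvalues assemble into an element of $\Bbbk[\lambda]$ with no $D$-dependence, which is a separate nontrivial claim about rank-one subquotients. (b) The invariance of $V_{\varphi'}$: you correctly identify that everything reduces to $\varphi'_{[x\oo{\lambda}y]}=0$, but the classical mechanism --- a trace computed on the finite-dimensional span of $w,xw,x^2w,\dots$ --- has no conformal counterpart, because the $\Bbbk$-span of $w$ and all $x_{n_1}\cdots x_{n_k}w$ is only finite over $H$ and there is no trace to take. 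Saying that ``one must run the analogous finiteness argument for polynomial-valued weights'' names the obstacle without overcoming it. The actual proof in \cite{DK1998} does not adapt the classical argument directly; it passes to the (extended) annihilation algebra of $L$, locates a nonzero finite-dimensional subspace of $V$ killed by a congruence subalgebra, and applies the classical Lie theorem to the resulting finite-dimensional solvable quotient. Without that (or an equivalent device) your proposal is an accurate plan with its central lemma missing.
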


\begin{thm}\label{thm:ZPBW_Solvable}
For every finite torsion-free solvable Lie conformal algebra $L$
over an algebraically closed
field $\Bbbk $ there exists a basis $B$ of $L$ over $H$ such that
$L$ satisfies the central PBW-property with respect to $B$.
\end{thm}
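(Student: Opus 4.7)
The plan is to invoke the conformal version of Lie's theorem (Theorem~\ref{thm:DK_Solvable}) to produce a triangularising basis for the adjoint action and then to construct the function $N$ inductively along the resulting chain of ideals. First I would fix a chain of ideals $0=L_0\subset L_1\subset\cdots\subset L_n=L$ whose successive quotients $L_i/L_{i-1}=H\bar a_i$ are weight $L$-modules, and lift the generators $\bar a_i$ to elements $a_i\in L_i\setminus L_{i-1}$. I take $B=\{a_1,\dots,a_n\}$.

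The key algebraic input is that for every $b\in B$, every $i$ and every $n\ge 0$ one has a triangular expansion
\[
[b\oo{n} a_i]=c^{b,i}_n\, a_i+\sum\limits_{j<i} h^{b,i,n}_{j}(D)\, a_j,\qquad c^{b,i}_n\in\Bbbk,\ h^{b,i,n}_{j}\in H,
\]
where the scalarity of the diagonal coefficient $c^{b,i}_n$ is precisely the weight condition $[b\oo{\lambda}a_i]\equiv\varphi^{(i)}_b(\lambda)\,a_i\pmod{L_{i-1}[\lambda]}$ of Theorem~\ref{thm:DK_Solvable}, and the range $j\le i$ uses that $L_i$ is an ideal. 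Locality ensures that for each fixed triple $(b,i,j)$ only finitely many $n$ give a nonzero $h^{b,i,n}_j$, so $M_{b,i,j}:=\max_n\deg h^{b,i,n}_j$ is a finite non-negative integer (taken to be $0$ if no such term exists). I would then set $N(a_1)=1$ and, recursively,
\[
N(a_i)=1+\max_{b\in B,\,j<i}\bigl(N(a_j)+M_{b,i,j}\bigr),
\]
which guarantees $N(a_i)>0$ and $N(a_i)-N(a_j)>M_{b,i,j}$ for all $j<i$ and $b\in B$.

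To verify that $I(B,N)$ is invariant under \eqref{eq:CentralAction}, note first that since $L=HB$ and $(Dx)\oo{n} u=-n\,x\oo{n-1}u$ by Lemma~\ref{lem:CentralActionProp}, an induction on the $D$-degree reduces invariance to the action of elements of $B$. For $b\in B$ and $m\ge N(a_i)$, expanding $b\oo{n}(t^m\otimes_H a_i)$ via \eqref{eq:CentralAction} and pushing each $h^{b,i,n-s}_j(D)$ across $\otimes_H$ (which, via $t^k\cdot D=-kt^{k-1}$, lowers the exponent of $t$ by exactly $\deg h$) produces a combination of monomials $t^p\otimes_H a_j$ with $p\ge m+s-\deg h^{b,i,n-s}_j\ge m-M_{b,i,j}$. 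For the diagonal contribution ($j=i$, degree $0$) one has $p\ge m\ge N(a_i)$; for $j<i$ one gets $p\ge N(a_i)-M_{b,i,j}\ge N(a_j)+1$. In both cases $t^p\otimes_H a_j\in I(B,N)$, as required.

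The main obstacle, and the reason the recursive definition of $N$ terminates, is exactly the scalarity of the diagonal coefficient $c^{b,i}_n$. If $[b\oo{n}a_i]$ contributed a nontrivial $H$-polynomial times $a_i$, the defining inequality for $N(a_i)$ would involve $N(a_i)$ itself with a positive degree shift and the recursion would blow up---this is exactly the obstruction illustrated by the Virasoro example above, so solvability (which is what delivers the weight structure through Theorem~\ref{thm:DK_Solvable}) is indispensable.
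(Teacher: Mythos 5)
Your proposal is correct and follows essentially the same route as the paper: both extract a triangularising basis with scalar diagonal weights from the conformal Lie Theorem~\ref{thm:DK_Solvable}, define $N$ recursively along the triangular order by offsetting with the $D$-degrees of the off-diagonal structure coefficients, and verify invariance of $I(B,N)$ by the same degree count (your indexing runs up the chain of ideals rather than down, which is immaterial).
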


\begin{proof}
Theorem \ref{thm:DK_Solvable} implies that there exists
a basis $B=\{b_1,\dots, b_n\}$ of $L$ such that
\[
[a\oo{\lambda } b_i] = \sum\limits_{j=i}^n
  \varphi_{a,i}^j(D,\lambda )b_j, \quad
   a\in L,\ \varphi_{a,i}^j\in \Bbbk[D,\lambda ],\ i=1,\dots, n,
\]
i.e., the regular representation of $L$ on itself
is triangular. Moreover,
$\varphi_{a,i}^i(D,\lambda ) = \varphi_a(\lambda )$.

Define a function $N:B\to \mathbb Z_+$ in the following way.
Choose any integer $K>0$, and set
\[
N(b_n) = K,\quad N(b_i) = \max\limits_{j>i,a\in B}
 \{N(b_j) + \deg_D \varphi_{a,i}^j \},\ 1\le i\le n-1.
\]
Then
$I(B,N)$ is invariant with respect to \eqref{eq:CentralAction}.
Indeed, let $a\in A$ and
\[
 \varphi_{a,i}^j = \sum\limits_{s\ge 0}
  \dfrac{\lambda ^s}{s!} f_{a,i}^{j,s}(D), \quad f_{a,i}^{j,s}\in H,
  \ 1\le i\le j\le n.
\]
In particular, $N(b_i)\ge N(b_j)+\deg f_{a,i}^{j,s}$ for all $j>i$.
Then for every $m>N(b_i) $ we have
\begin{multline}\nonumber
a\oo{k} (t^m\otimes_H b_i)
 = \sum\limits_{s\ge 0} \binom{k}{s} t^{m+s}\otimes_H
  \left(\sum\limits_{j=i}^n f_{a,i}^{j,k-s}(D) b_j \right)  \\
 =\sum\limits_{j=i}^n
  \bigg(\sum\limits_{s\ge 0}
  \binom{k}{s} t^{m+s}\cdot f_{a,i}^{j,k-s}(D)
   \bigg)\otimes_H b_j
 =\sum\limits_{j=i}^n g_{a,i}^{k,j}(t)\otimes_H b_j,
\end{multline}
where
$\mathrm{lowdeg}\, g_{a,i}^{k,j} \ge
 m - \max\limits_{s\ge 0}\deg f_{a,i}^{j,k-s} \ge N(b_j)$
 for $j>i$. If $j=i$ then $f_{a,i}^{i,k-s}\in \Bbbk $, so, finally,
\[
a\oo{k} (t^m\otimes_H b_i) \in I(B,N), \quad a\in B,
\ m\ge N(b_i),\ i=1,\dots, n.
\]
This is enough to conclude that $L$ has the central PBW property
with respect to~$B$.
\end{proof}

\begin{thm}\label{thm:FiniteReprExists}
A finite torsion-free solvable Lie
conformal algebra $L$
has a finite faithful representation
on a free $H$-module.
\end{thm}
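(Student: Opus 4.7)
The plan is simply to chain together the two preceding theorems. Assuming first that $\Bbbk$ is algebraically closed, I would apply Theorem~\ref{thm:ZPBW_Solvable} to obtain an $H$-basis $B$ of $L$ for which the central PBW property holds, and then invoke Theorem~\ref{thm:ZPBW->FFR} to produce a finite faithful representation. Theorem~\ref{thm:ZPBW->FFR} is phrased only in terms of torsion-free modules, but a glance at its proof shows that the representing module is the direct sum $V\oplus M$ with $V=Hu$ of rank one and $M=H\otimes(\mathcal L/I(B,N))$; both summands are manifestly free over $H$, and the total rank is $1+\dim_\Bbbk(\mathcal L/I(B,N))$, which is finite by the definition of the central PBW property. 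Hence the representation lives on a finite free $H$-module, as required.

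To pass from an algebraically closed field to an arbitrary characteristic-zero field $\Bbbk$---which is needed because Theorems~\ref{thm:DK_Solvable} and~\ref{thm:ZPBW_Solvable} require algebraic closure---I would argue by descent. Set $\bar L=\bar\Bbbk\otimes_\Bbbk L$; this is again finite, torsion-free over $\bar H=\bar\Bbbk[D]$, and solvable, so by the algebraically closed case it embeds faithfully into some $\Cend_n^{\bar\Bbbk}$. Restricting this embedding to $L\subseteq\bar L$ remains faithful, and since $L$ has finite $H$-rank, the matrix entries of the images of an $H$-basis involve only finitely many scalars of $\bar\Bbbk$, hence lie in some finite extension $\Bbbk'/\Bbbk$. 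Restricting the natural action of $\Cend_n^{\Bbbk'}$ on $(H')^n\simeq H^{n[\Bbbk':\Bbbk]}$ from $H'$ to $H$ yields a conformal embedding $\Cend_n^{\Bbbk'}\hookrightarrow\Cend_{n[\Bbbk':\Bbbk]}^{\Bbbk}$, completing the argument.

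I do not anticipate a real obstacle: the whole proof is bookkeeping on top of the earlier results. The only point requiring a moment of care is tracking that the output module remains free rather than merely torsion-free at each step, which is transparent from the explicit constructions in the proofs of Theorems~\ref{thm:DoubleRepr} and~\ref{thm:ZPBW->FFR}.
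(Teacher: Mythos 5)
Your proposal is correct and follows essentially the same route as the paper: first the algebraically closed case via Theorem~\ref{thm:ZPBW_Solvable} followed by Theorem~\ref{thm:ZPBW->FFR} (whose module $V\oplus M$ is indeed free, as is automatic anyway for a finitely generated torsion-free module over the PID $H$), and then descent to an arbitrary field of characteristic zero through a finite extension $\Bbbk'$ containing the finitely many structure constants. The only difference is cosmetic: the paper carries out the restriction of scalars by hand, expanding $\alpha^k\varphi_{b,i}^j$ in the power basis of $\Bbbk'=\Bbbk(\alpha)$ and re-verifying the conformal Jacobi identity for the resulting module with basis $e_i^k$, whereas you package the identical construction as the embedding $\Cend_n^{\Bbbk'}\hookrightarrow\Cend_{n[\Bbbk':\Bbbk]}^{\Bbbk}$, which lets the verification come for free.
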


\begin{proof}
If the field $\Bbbk $ is algebraically closed
then the claim follows from Theorems \ref{thm:ZPBW->FFR}
and \ref{thm:ZPBW_Solvable}.

Suppose $\Bbbk $ is an arbitrary field (of characteristic zero),
and let $\bar\Bbbk$ stands for its algebraic closure.
Then $\bar L = \bar\Bbbk\otimes_\Bbbk L$ is a Lie conformal algebra
over $\bar\Bbbk$. If $L$ is finite (torsion-free, solvable)
then so is $\bar L$.

To complete the proof, we only need the following

\begin{lem}
Suppose $L$ is a finite torsion-free
Lie conformal algebra over $\Bbbk $.
If $\bar L = \bar\Bbbk\otimes_\Bbbk L$
has a finite faithful representation on a free module
over $\bar H= \bar\Bbbk[D]$ then $L$ has a finite faithful
representation on a free $H$-module.
\end{lem}

\begin{proof}
Let $B$ stands for a finite basis of $L$ over $H=\Bbbk[D]$,
then $B$ is a basis of $\bar L$ over $\bar H=\bar\Bbbk [D]$.
The multiplication  table of $\bar L$ with respect to $B$
coincides with the one of $L$. Assume
\[
[a\oo{\lambda } b] = \sum\limits_{c\in B} g_{a,b}^c(D,\lambda )c,
\quad a,b\in B,\ g_{a,b}^c\in \Bbbk [D,\lambda ].
\]

Suppose $\bar V$ is a finite faithful conformal $\bar L$-module
which is free as an $\bar H$-module.
Let us choose a basis $\{e_1,\dots, e_m \}$
 of $\bar V$ over $\bar H$. Then the
structure of a conformal $\bar L$-module on $\bar V$ is given
by
\[
 a\oo{\lambda } e_i = \sum\limits_{j=1}^m \varphi_{a,i}^j (D,\lambda )e_j,
\quad i=1,\dots, m,\ a\in B,\ \varphi_{a,i}^j\in \bar\Bbbk[D,\lambda ].
\]

Since the number of polynomials $\varphi_{a,i}^j$ is finite,
there exists a finite extension $F$ of the field $\Bbbk $
such that all these polynomials belong to $F[D,\lambda ]$.
We may assume $F=\Bbbk(\alpha )$, where $\alpha $ is algebraic
over~$\Bbbk $. Suppose $n\ge 1$ is the degree of the minimal polynomial
for~$\alpha $.
Then we may find
$f_{b,i,k}^{j,l}\in \Bbbk[D,\lambda ]$,
$k=0,\dots, n-1$, $i,j=1,\dots, m$,  $b\in B$,
such that
\begin{equation}\label{eq:Varphi-f}
\alpha^k \varphi_{b,i}^j(D,\lambda ) =
  \sum\limits_{l=0}^{n-1} \alpha ^l f_{b,i,k}^{j,l}(D,\lambda ).
\end{equation}

Consider the free $H$-module $W$ generated by the finite
set $\{e_i^k\mid i=1,\dots, m,\, k=0,\dots, n-1 \}$
and define
\begin{equation}\label{eq:Ext-Action}
b\oo{\lambda } e_i^k
 = \sum\limits_{j=1}^m\sum\limits_{l=0}^{n-1}
 f_{b,i,k}^{j,l}(D,\lambda )e_j^l,
 \quad
b\in B.
\end{equation}
Relation \eqref{eq:Ext-Action} can be extended by sesqui-linearity
to a map $(\cdot\oo{\lambda } \cdot): L\otimes W \to W[\lambda ]$.
Let us check that the analogue of \eqref{eq:ConfJacobi}
holds for this operation, i.e., in our notations, we have to show
the equation
\begin{multline}\label{eq:Ext-Jac2}
\sum\limits_{j,p=1}^m \sum\limits_{l,q=0}^{n-1}
 \big [
f_{b,i,k}^{j,l} (D+\lambda )f_{a,j,l}^{p,q}(D,\lambda )
 - f_{a,i,k}^{j,l}(D+\mu, \lambda ) f_{b,j,l}^{p,q}(D,\mu )
 \big] \\
=
 \sum\limits_{c\in B}\sum\limits_{p=1}^m\sum\limits_{q=0}^{n-1}
  g_{a,b}^c(-\lambda -\mu, \lambda )f_{c,i,k}^{p,q}(D,\lambda +\mu)
\end{multline}
for all $a,b\in B$, $i=1,\dots, m$, $k=0,\dots, n-1$.

It follows from \eqref{eq:ConfJacobi} for the action of
$\bar L$ on $\bar V$ that
\begin{multline}\label{eq:ExtJac-1}
\sum\limits_{j,p=1}^m
\big [\varphi_{b,i}^j(D+\lambda ,\mu)\varphi_{a,j}^p(D,\lambda )
  - \varphi_{a,i}^j (D+\mu)\varphi_{b,j}^p (D,\mu ) \big] \\
  =
\sum\limits_{c\in B} \sum\limits_{p=1}^m
  g_{a,b}^c(-\lambda -\mu, \lambda )\varphi_{c,i}^p(D,\lambda+\mu ),
  \quad
a,b\in B,\ i=1,\dots, m.
\end{multline}
Now we can multiply both sides of \eqref{eq:ExtJac-1} by $\alpha^k$,
$k=0,\dots, n-1$, and apply \eqref{eq:Varphi-f} to obtain
\eqref{eq:Ext-Jac2}.

Finally, if $x=\sum\limits_{b\in B} f_b(D)b \in \Kerr W$ for some
$f_b(D)\in H$ then we have
\begin{equation}\label{eq:ExtKernel}
\sum\limits_{b\in B} f_b(-\lambda ) f_{b,i,k}^{j,l}(D,\lambda )=0
\end{equation}
for all $j=1,\dots, m$, $l=0,\dots, n-1$. In particular,
\[
\sum\limits_{l=0}^{n-1}\alpha^l \left (
\sum\limits_{b\in B} f_b(-\lambda ) f_{b,i,k}^{j,l}(D,\lambda )
\right ) =
\alpha^k \sum\limits_{b\in B} f_b(-\lambda )
  \varphi_{b,i}^j (D,\lambda ) =0.
\]
Therefore, $x\oo{\lambda }\bar V = 0$. Since $\bar V$ is faithful, we have
$x=0$, so $W$ is a faithful conformal $L$-module.
\end{proof}

\end{proof}

\subsection*{Acknowledgements}
The author gratefully acknowledges the support from Lavrent'ev
Young Scientists Competition (No 43 on 04.02.2010) and
 SSc-3669.2010.1.
The major part of the work was done in the
University of California, San Diego. The author thanks the
UCSD Department of Mathematics for the hospitality during the visit.

\end{document}